\theoremstyle{definition}
\newtheorem*{theorem2}{Theorem}
\newtheorem{theorem}{Theorem}[section]
\newtheorem{lemma}[theorem]{Lemma}
\newtheorem{corollary}[theorem]{Corollary}
\newtheorem{example}[theorem]{Example}
\newtheorem{examples}[theorem]{Examples}
\newtheorem{proposition}[theorem]{Proposition}
\newtheorem{question}[theorem]{Question}
\def\N{{\mathbb N}}
\def\R{{\mathbb R}}
\newcommand{\nrm}[1]{\left\lVert#1\right\rVert}
\def\xto{\xrightarrow}
\def\to{\rightarrow}
\def\toto{\rightrightarrows}
\def\xfrom{\xleftarrow}
\def\from{\leftarrow}
\def\action{\curvearrowright}
\def\xto{\xrightarrow}
\def\to{\rightarrow}
\def\toto{\rightrightarrows}
\def\xfrom{\xleftarrow}
\def\from{\leftarrow}
\def\action{\curvearrowright}
\def\r#1{|_{#1}}
\newcommand{\n}[1]{\lVert#1\rVert}
\newcommand{\m}[1]{\left\lvert#1\right\rvert}
\title{\textbf{On invariant linearization of Lie groupoids}}
\author{Matias del Hoyo \and Mateus de Melo}
\date{\today}
\begin{document}
{%\violet

\maketitle

\begin{abstract}
The Linearization Theorem for proper Lie groupoids organizes and generalizes several results for classic geometries. 
Despite the various approaches and recent works on the subject, 
the problem of understanding invariant linearization remains somehow open.
We address it here, by first giving a counter-example to a previous conjecture, and then proving a sufficient criterion that uses compatible complete metrics and covers the case of proper group actions.
We also show a partial converse that fixes and extends previous results in the literature.
\end{abstract}

%%%%%%%%%%%%%%%%%%%%%%%%%%%%%%%%%%%%%%

%\setcounter{tocdepth}{1}
%\tableofcontents

\section{Introduction}

% weinstein-zung

The {\it Linearization Theorem} for proper Lie groupoids is a cornerstone of the theory. It generalizes classic results such as Ehresmann's theorem for submersions, Reeb stability for foliations, and the tube theorem of compact group actions. It also serves as a key tool in establishing local models for Poisson geometry. The original source \cite{w} proves the regular case and made important contributions, such as a reduction to the fixed-point case. A first complete proof was given in \cite{z}, 
with some confusion in the statements and the extra assumption of {\it source-locally trivial}.
The hypothesis and variants were later clarified in \cite{cs}.

% crainic-struchiner

 Given $G\toto M$ a Lie groupoid, the {\it linear model} around an orbit $O\subset M$,
which we review in Prop. \ref{prop:local-model}, is the groupoid-theoretic normal bundle $\nu(G,G\r{O})\toto \nu(M,O)$.
The Linearization Theorem 
\cite[Thm.1-Cor.2]{cs} claims that the groupoid is locally isomorphic to its linear model if it is {\it (s-)proper at a point}. This can be replaced by global (s-)proper, as explained in \cite[Rmk.5.1.4]{dh1}, by restricting the attention to an {\it  invariant neighborhood}, namely one that contains every orbit that it meets. 
We can then restate the theorem as follows:

\begin{theorem}[{\cite{cs}}]\label{thm:linearization}
If $G\toto M$ is a proper Lie groupoid and $O\subset M$ is an orbit, then there are open neighborhoods $O\subset U\subset \nu(M,O)$ and $O\subset V\subset M$ and a {\it linearization} isomorphism:
$$\phi:(\nu(G,G\r{O})\r{U}\toto U)\cong (G\r{V}\toto V).$$
If $G\toto M$ is source-proper then the linearization is {\it invariant}, $U,V$ can be taken to be invariant. 
\end{theorem}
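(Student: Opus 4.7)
The plan is to deduce the invariant statement from the non-invariant one, i.e. the first assertion of the theorem, which is the cited result of \cite{cs} (combined with \cite[Rmk.5.1.4]{dh1} to pass from a local to a global properness hypothesis), and I would take this as given. Under the stronger source-proper assumption, I would then produce an invariant linearization by shrinking the neighborhoods coming out of the non-invariant statement.

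Concretely, after fixing any linearization $\phi:\nu(G,G\r{O})\r{U_0}\cong G\r{V_0}$ produced by the first step, with underlying object diffeomorphism $\phi_0:U_0\cong V_0$, the task is to shrink $U_0$ and $V_0$ to invariant open neighborhoods of $O$ in such a way that $\phi$ still restricts to an isomorphism. The key fact is that source-proper Lie groupoids have closed saturations: for every closed $C\subset M$, the set $G\cdot C$ is closed in $M$. Indeed, $s:G\to M$ being proper forces $t=s\circ i$ to be proper via inversion, and the restriction of a proper map to a closed subset, here $t\r{s^{-1}(C)}$, remains proper; its image is exactly $G\cdot C$. Applying this to $C=M\setminus V_0$, I would set
$$V:=M\setminus G\cdot (M\setminus V_0), \qquad U:=\phi_0^{-1}(V),$$
so that $V$ is an open invariant neighborhood of $O$ contained in $V_0$, and $U$ is an open invariant neighborhood of $O$ contained in $U_0$, the latter because the groupoid isomorphism $\phi$ forces $\phi_0$ to intertwine the linear and original orbit partitions. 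The restriction of $\phi$ to these smaller neighborhoods is then the desired invariant linearization.

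The genuine obstacle of the whole theorem is hidden inside the cited non-invariant statement, whose proof is technical (reducing to the fixed-point case and applying an averaging/exponential-type argument with respect to a compatible metric). Once that input is available, the shrinking step above is essentially formal; the only point meriting attention is that $O\subset V$, which follows from the fact that $O$ is an orbit contained in $V_0$, so any orbit meeting $M\setminus V_0$ is disjoint from $O$, and hence $O\cap G\cdot (M\setminus V_0)=\emptyset$.
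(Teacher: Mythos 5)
The paper offers no proof of this statement---it is quoted from \cite{cs} together with \cite[Rmk.5.1.4]{dh1}---so your proposal can only be judged on its own terms. Your overall strategy (take the non-invariant linearization as the cited input and deduce the invariant one in the source-proper case by shrinking) is the standard one, and the first half of your shrinking is correct: since $s$, hence $t=s\circ i$, is proper, $G\cdot C=t(s^{-1}(C))$ is closed for every closed $C$, so $V=M\setminus G\cdot(M\setminus V_0)$ is the saturated interior of $V_0$, an invariant open containing $O$. The gap is in the claim that $U=\phi_0^{-1}(V)$ is invariant in $\nu(M,O)$. The isomorphism $\phi$ identifies the orbit partition of the \emph{restricted} groupoid $\nu(G,G_O)\r{U_0}$ with that of $G\r{V_0}$, and the orbits of $\nu(G,G_O)\r{U_0}$ are the intersections $\mathcal{O}\cap U_0$ of full linear-model orbits with $U_0$. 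So what your argument actually yields is that $U$ is saturated for the restricted groupoid: for each $[v]\in U$ the set $\mathcal{O}_{[v]}\cap U_0$ lies in $U$. This does not give $\mathcal{O}_{[v]}\subset U$, because a priori the full orbit $\mathcal{O}_{[v]}$ may leave $U_0$ altogether. In the source-proper case these orbits are compact but need not be connected (source fibers of a proper groupoid can be disconnected), so $\mathcal{O}_{[v]}\cap U_0$ can be a proper clopen piece of $\mathcal{O}_{[v]}$, and nothing in your argument rules this out; ``intertwining the orbit partitions'' only holds for the restricted groupoids.

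The missing ingredient is to shrink on the linear-model side as well, which requires observing that the linear model of a source-proper groupoid is itself source-proper (its source map is a pullback of the proper map $s:G_O\to O$), so saturations of closed subsets of $\nu(M,O)$ are also closed. One then takes $W\subset U_0$ the saturated interior of $U_0$ for the linear model, $V'\subset V_0$ the saturated interior of $V_0$ for $G$, and sets $U:=W\cap\phi_0^{-1}(V')$. For points of $U$ the full orbit on the linear side stays in $U_0$ and the full orbit of the image stays in $V_0$, so restricted and ambient orbits coincide on both sides and $\phi_0$ genuinely intertwines the full orbit partitions there; a short check then shows that $U$ and $V:=\phi_0(U)$ are both invariant open neighborhoods of $O$. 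Your single-sided shrinking skips exactly this step.
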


% crainic-struchiner

In \cite{cs} the authors give a simple proof of the fixed-point case, clarify several other aspects, and propose as an open problem the characterization of invariant linearization. As they posed it, while the above theorem implies a large number of related {\it classic results}, it is intriguing that it does not cover the linearization for proper actions of non-compact groups \cite[Thm.2.4.1]{dk}. 
%Seeking for a sufficient condition for invariant linearization, 
They propose as a possible solution the notion of {\it source inv-trivial} groupoid, which we can replace by source trivial, again by restricting the attention to an invariant neighborhood.

\begin{question}[cf. {\cite[Probl.0.1]{cs}}]\label{q:open}
Does a proper Lie groupoid $G\toto M$ whose source map is trivial admit an invariant linearization around its orbits?
\end{question}

Our first contribution here is a negative answer to this question in Ex. \ref{ex:counter}. It turns out that \cite[Ex.10.1]{w}, which combines exotic structures in $\R^4$ and the results on smooth fibrations from \cite{m}, is already a counter-example. We made a slight simplification, and use the main result in \cite{dh2} to insure that a locally trivial submersion over a contractible manifold is trivial.

% riemannian

A new approach to the linearization of groupoids %, which improves some statements and simplify some proofs, 
was developed in \cite{dhf}. 
Given $G\toto M$ a Lie groupoid, denote by $G_2=G\times_M G$ the manifold of pairs of composable arrows, which identify with commutative triangles.
A {\it 2-metric} is a metric $\eta_2$ on $G_2$ that is fibered for the multiplication $m:G_2\to G$ and invariant under the action $S_3\action G_2$ permuting the vertices of a triangle. Such an $\eta_2$ induces a 1-metric $\eta_1$ on $G$, and a {\it 0-metric} $\eta_0$ on $M$, which is invariant under the normal representation \cite{ppt}.
The main results in \cite{dhf} show a recipe to cook up 2-metrics on proper groupoids, called the {\it gauge trick}, and show that 2-metrics give linearizations via the exponential maps around full invariant subgroupoids, in particular around orbits.
%Theorem 5.11 and Corollaries 5.13 and 5.14 of \cite{dhf} give:

\begin{theorem}[\cite{dhf}]\label{thm:riem-grpd-lin}
If $G\toto M$ is proper and $\eta_2$ is a 2-metric 
then there are open neighborhoods $O\subset U\subset \nu(M,O)$ and $O\subset V\subset M$ such that the following is an isomorphism:
$$\exp=(\exp^{\eta_1},\exp^{\eta_0}):(\nu(G,G\r{O})\r{U}\toto U)\cong (G\r{V}\toto V)$$
If $G\toto M$ is source-proper then we can take $U$ and $V$ to be invariant. 
\end{theorem}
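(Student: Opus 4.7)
The plan is to exploit the fact that a 2-metric encodes precisely the Riemannian compatibility needed to upgrade the exponential maps $\exp^{\eta_0}$ and $\exp^{\eta_1}$ to a morphism of Lie groupoids, and then to use source-properness and the invariance properties of the induced metrics to propagate the construction to a saturated neighborhood.

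First I would record the consequences of the 2-metric structure: averaging over $S_3\action G_2$ and using the fibration of $\eta_2$ along $m:G_2\to G$, one deduces that $s,t:(G,\eta_1)\to(M,\eta_0)$ are Riemannian submersions, that inversion is an isometry of $\eta_1$, that the unit $u:M\to G$ is an isometric embedding onto a totally geodesic submanifold, and that $m:(G_2,\eta_2)\to(G,\eta_1)$ is itself a Riemannian submersion. Standard tubular-neighborhood arguments then yield open neighborhoods $U_0$ of the zero section in $\nu(M,O)$ and $\tilde U_0$ of the zero section in $\nu(G,G\r{O})$ on which $\exp^{\eta_0}$ and $\exp^{\eta_1}$ are diffeomorphisms.

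Next I would check that the pair $(\exp^{\eta_1},\exp^{\eta_0})$ intertwines all the structure maps, using the general principle that a Riemannian submersion $f:X\to Y$ satisfies $f\circ\exp^X=\exp^Y\circ df$ on horizontal vectors. Applied to $s,t$ this gives compatibility with source and target; the isometry property of the inverse and the totally-geodesic property of $u$ give compatibility with inversion and units. The main obstacle is compatibility with multiplication: here one applies the same principle to $m$, obtaining $m\circ\exp^{\eta_2}=\exp^{\eta_1}\circ\nu(m)$, and transfers it to an identity between $\exp^{\eta_1}$ and multiplication in the linear model via the canonical identification of $\nu(G_2,G_2\r{O})$ with the manifold of composable pairs in $\nu(G,G\r{O})$, itself resting on $s,t$ being submersions transverse to $G\r{O}$.

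Once compatibility with all structure maps is established on some tubular neighborhoods, the pair $(\exp^{\eta_1},\exp^{\eta_0})$ is a morphism of Lie groupoids that restricts to the identity on $G\r{O}\toto O$ and to an isomorphism on normal bundles, hence a local groupoid isomorphism; shrinking yields the sought $U,V$. For the invariant statement, the induced 0-metric $\eta_0$ is invariant under the normal representation on $\nu(M,O)$, so $\epsilon$-tubes around $O$ are saturated open neighborhoods, and the same holds in $\nu(G,G\r{O})$. Source-properness then guarantees that a uniform $\epsilon>0$ may be chosen for which $\exp^{\eta_1}$ is still a diffeomorphism on the corresponding tube, producing invariant $U$ and $V$.
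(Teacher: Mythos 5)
The paper itself gives no proof of this statement --- it is quoted from \cite{dhf} --- so I am comparing your proposal with the argument given there. The first part of your outline does match that argument: a 2-metric makes $s,t,m$ Riemannian submersions, the unit section totally geodesic and the inversion an isometry, and the naturality of exponentials under Riemannian submersions ($f\circ\exp=\exp\circ\,df$ on horizontal vectors), together with the identification of $\nu(G_2,(G_2)\r{O})$ with the composable pairs of the linear model, shows that $(\exp^{\eta_1},\exp^{\eta_0})$ is a groupoid morphism wherever defined. That much is fine.

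The genuine gap is hidden in ``hence a local groupoid isomorphism; shrinking yields the sought $U,V$'', and it is exactly where the hypothesis of properness --- which your argument never invokes --- has to enter. A groupoid morphism that is \'etale along $G\r{O}\toto O$ does not automatically restrict to an isomorphism of the stated form: one must find an open $U\subset\nu(M,O)$ such that $\exp^{\eta_1}$ is injective on the \emph{full} restriction $\overline{ds}^{-1}(U)\cap\overline{dt}^{-1}(U)$ --- a set that is large in the arrow directions and is not contained in the a priori tubular neighborhood $\tilde U_0$ where you know $\exp^{\eta_1}$ is a diffeomorphism --- and such that its image is all of $G\r{V}=s^{-1}(V)\cap t^{-1}(V)$, i.e.\ every arrow with source and target in $V$ is reached. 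Both of these fail for general non-proper Riemannian groupoids, so the theorem would be false as you have proved it; in \cite{dhf} they are obtained from properness (arrows between points of a small neighborhood of $x\in O$ stay close to $G\r{O}$) together with the transfer of \'etaleness and injectivity from base to arrows along the Riemannian submersions $s,t$, as in Lemma~\ref{lemma:admissible}.

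The invariant case is also not correct as written. Source-properness forces $O$ to be compact, which does give a uniform $\epsilon$, and invariance of $\eta_0$ under the normal representation does make $B'(O,\epsilon)\subset\nu(M,O)$ invariant; but none of this makes $V=\exp^{\eta_0}(B'(O,\epsilon))$ invariant in $M$. The exponential is not equivariant for any action of $G$ on $M$, and arranging for metric tubes in $M$ to be invariant is precisely the problem that Prop.~\ref{prop:tube}(iii) and Thm.~\ref{thm:sufficient} of this paper solve using \emph{completeness} of $\eta_0$, a hypothesis you do not have. For the source-proper case the correct route is different: $s$-properness makes the projections to the orbit spaces of $G$ and of the linear model open and closed, so inside any linearization $(U,V)$ one may pass to the open invariant set of orbits entirely contained in $V$ and to its preimage, producing invariant $U,V$; see \cite[Rmk.5.1.4]{dh1}.
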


%The Morita invariance of linearization, that allowed the reduction to the fixed point case \cite{w}, also appears in the metric approach, giving rise to a the study of Riemannian metrics on differentiable stacks, explored in \cite{dhf2,dhdm}. 

In this paper we build over the Riemannian theory  of groupoids and stacks \cite{dhf,dhf2,dhdm} to characterize proper groupoids that are invariantly linearizable. In Thm. \ref{thm:sufficient} we give a sufficient condition for invariant linearization in terms of completeness of groupoid metrics. Then we show in Cor. \ref{coro:tube} how our criterion easily implies the tube theorem for proper actions of non-compact groups. And in Thm. \ref{thm:main2} we cook up complete 0-metrics on proper invariantly linearizable groupoids. This can be seen as (i) a partial converse for Thm. \ref{thm:sufficient}, (ii) a fixed version of \cite[Prop.3.14]{ppt} which is one of the main results there, and (iii) a stacky version of  \cite[Thm.5]{dh2}, for we build a complete metric on $M$ which is fibered with respect to the stacky projection $M\to[M/G]$. Our proof in fact adapts the ideas presented in \cite{dh2}. We can summarize our main contributions as follows:

%Our work imitates \cite{dh2}, where fiber bundles are characterized as submersions admitting a complete fibered metric. 

\begin{theorem2}[\ref{thm:sufficient}, \ref{thm:main2}]\label{thm:main-intro}
Let $G\toto M$ be a proper groupoid. Then:
\begin{itemize}
\item[(i)] If it admits a 2-metric $\eta_2$ such that $\eta_0$ is complete, then $G\toto M$ is invariantly linearizable. 
\item[(ii)] If $G\toto M$ is invariantly linearizable then it admits a complete 0-metric $\eta_0$.
\end{itemize}
\end{theorem2}

Note that (ii) is not the exact converse of (i), for we do not know if there is a 0-metric $\eta_0$ which actually extends to a 2-metric. The extension problem for metrics may not have a positive answer in general, see \cite{dhf}, and keeping track of completeness along the gauge trick is delicate. 
Anyway, we conjecture that the converse of (i) holds, and we prove it for regular groupoids in Cor. \ref{cor:regular}, where this extension problem has always a solution. 

%The converse of (i) for general groupoids remains unanswered.

%Up to now, we do not know whether if the exact converse of (ii) holds. But we managed to prove it at least for regular groupoids, in Cor. \ref{coro:tube}. 

%
%things become simpler when working with regular groupoids. In that case, we managed to get a complete characterization in Cor. \ref{coro:tube}.

%.Let $G\toto M$ be a proper regular groupoid. Then 
%$G\toto M$ is invariantly linearizable
%i%f and only if
%i%t admits a 2-metric $\eta$ such that $\eta_0$ is complete. 
%\end{corollary}

%Thus far, we do not know whether an invariantly linearizable proper groupoid admits a groupoid metric $\eta^{G_2}$ such that $\eta^M$ is complete. We leave this question to be treated elsewhere.

\medskip

%{\bf Organization.} 
%Section 2 reviews the local model, shows that invariant linearizable implies source-locally trivial, and gives Ex. \ref{ex:counter} of a source-trivial proper groupoid which is not invariantly linearizable. 
%Section 3 reviews Riemannian groupoids, gives in Thm. \ref{thm:sufficient} a sufficient condition for invariant linearization, and shows how to frame the Tube theorem for proper actions of non-compact groups into the framework. Finally, Section 4 reviews a bit of geodesics on stacks and present a construction of complete 0-metrics on invariant linearizable groupoids in \ref{thm:main2}.

\medskip

{\bf Acknowledgments.} 
We thank M. Alexandrino, H. Bursztyn, R. Fernandes, P. Frejlich and I. Struchiner for many stimulating talks. 
MdH was partially supported by National Council for Scientific and Technological Development — CNPq grants 303034/2017-3 and 429879/2018-0, and by FAPERJ grant 210434/2019. 
MdM was supported by FAPESP grant 2019/14777-3.

%%%%%%%%%%%%%%%%%%%%%%%%%%%%%%%%%%%%%%%%%%
%%%%%%%%%%%%%%%%%%%%%%%%%%%%%%%%%%%%%%%%%%

\section{Linearization and source-triviality}

% overview

We review various constructions for the linear model of a Lie groupoid around an orbit, provide examples and basic facts about invariantly linearizable groupoids, and we present the Example \ref{ex:counter} of a source-trivial groupoid that is not invariantly linearizable, hence giving a partial answer to the open question proposed by \cite[Probl.0.1]{cs}.

% We recall that $s$-proper is not a necessary condition for invariant linearization. 
% We present a counterexample for conjectures proposed in [1, 3]. 
%We show that invariant linearizable groupoids are $s$-locally trivial.
% We show that invariant linearizable proper groupoids are either $s$-proper or groupoids whose all $s$-fibers are non-compact.

\medskip

% linear moddel

Given $G\toto M$ a Lie groupoid and given $O\subset M$ an orbit, we denote by $G_O\subset G$ the submanifolds of arrows within objects of $O$, so $G_O=s^{-1}(O)=t^{-1}(O)$. 
The restriction $G_O\toto O$ becomes a Lie subgroupoid, and the {\bf linear model} of $G\toto M$ around $O$ can be defined in any of the following equivalent ways (see \cite{w,cs,dh1}):

\begin{proposition}\label{prop:local-model}
The following groupoids are canonically isomorphic:
\begin{itemize}
\item[A)] The Lie groupoid-theoretic normal bundle $\nu(G,G_O)\toto \nu(M,O)$, whose objects and arrows are the normal bundles $\nu(M,O)=TM\r{O}/TO$ and $\nu(G,G_O)=TG\r{G_O}/TG_O$, and whose structure maps are induced by those of the tangent groupoid $TG\toto TM$; % In other words, the linear model is the VB-groupoid resulting of modding out the restriction $TG\r{G_O}\toto TM\r{O}$ by $TG_O\toto TO$;
\item[B)] The action groupoid $G_O\times_O \nu(M,O)\toto \nu(M,O)$ of the normal representation, which is the linear action of the restriction $G_O\toto O$ over the normal bundle $\nu(M,O)\to O$ given by $g\cdot [v]=[\partial_\epsilon y_\epsilon\r{\epsilon=0} ]$, where $y_\epsilon\xfrom{g_\epsilon} x_\epsilon$ is any curve satisfying $g_0=g$ and $[\partial_\epsilon x_\epsilon\r{\epsilon=0} ]=[v]$;
\item[C)] 
The quotient $[(P_x\times P_x\toto P_x)\times(N_x\toto N_x)]/G_x$ of the pair groupoid of a source-fiber $P_x=G(-,x)=s^{-1}(x)$ times the unit groupoid of the normal vector space $N_x=T_xM/T_xO$, by the group action 
$G_x\action P_x\times P_x\times N_x$, $\lambda\cdot(h,h',v)=(h\lambda^{-1},h'\lambda^{-1},\lambda v)$.
\end{itemize}
\end{proposition}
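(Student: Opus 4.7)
The plan is to construct natural isomorphisms $A\cong B$ and $B\cong C$ that intertwine the groupoid structures, after which the three descriptions agree by transport of structure.

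For $A\cong B$ the key observation is that $G_O=s^{-1}(O)$ and $s\colon G\to M$ is a submersion, so $TG_O=(ds)^{-1}(TO)$ and $ds$ induces a fibrewise-linear isomorphism
\[
\nu(G,G_O)\;\xrightarrow{\;\sim\;}\;s^{*}\nu(M,O)\;\cong\;G_O\times_O\nu(M,O)
\]
covering the identity of $G_O$. Under this identification, the source of $\nu(G,G_O)\toto\nu(M,O)$ induced from $ds\colon TG\to TM$ becomes projection to the second factor tautologically. For the target, a class $[\xi]\in\nu(G,G_O)$ corresponding to $(g,[v])$ lifts to any $\xi\in T_gG$ with $[ds(\xi)]=[v]$; choosing a curve $g_\epsilon$ with $\dot g_0=\xi$ yields $s(g_\epsilon)=x_\epsilon$ with $[\dot x_0]=[v]$ and $t(g_\epsilon)=y_\epsilon$, so $[dt(\xi)]=[\dot y_0]$ matches the formula in $B$ verbatim. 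Composition and units are verified analogously using $T(G\times_MG)=TG\times_{TM}TG$ and the naturality of the normal bundle functor.

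For $B\cong C$, fix $x\in O$. The target map $t\colon P_x\to O$ is a principal $G_x$-bundle, the normal representation restricts at $x$ to the linear isotropy representation on $N_x=T_xM/T_xO$, and standard associated-bundle theory gives
\[
\nu(M,O)\cong P_x\times_{G_x}N_x,\qquad G_O\cong P_x\times_{G_x}P_x,
\]
where the latter sends $[h,h']$ to $h(h')^{-1}$. Under these identifications the arrow space $G_O\times_O\nu(M,O)$ of $B$ becomes $(P_x\times P_x\times N_x)/G_x$ with action $\lambda\cdot(h,h',v)=(h\lambda^{-1},h'\lambda^{-1},\lambda v)$, and the structure maps of $B$ descend from those of the product groupoid $(P_x\times P_x\toto P_x)\times(N_x\toto N_x)$: source is $[h,h',v]\mapsto[h',v]$, target is $[h,h',v]\mapsto[h,v]$ (matching $g\cdot[h',v]=[h,v]$ for $g=h(h')^{-1}$), and composition is $[h,h',v]\circ[h',h'',v]=[h,h'',v]$. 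The $G_x$-equivariance of each structure map under the specified action is a routine check.

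The main obstacle is the verification in $A\cong B$ that the tangent groupoid structure descends to reproduce the normal representation. This is not conceptually deep, but requires tracking curves through source and target and exploiting the naturality of the normal bundle functor under each structure map; once this is set up, the isomorphism $B\cong C$ follows from the principal-bundle description of the orbit together with the compatibility of associated bundles with groupoid operations.
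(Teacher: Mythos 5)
Your proposal is correct and follows essentially the same route as the paper: the isomorphism $A\cong B$ rests on the observation that $ds$ identifies $\nu(G,G_O)$ with $s^{*}\nu(M,O)$ (which is exactly the paper's statement that the core of the VB-groupoid $\nu(G,G_O)\toto\nu(M,O)$ is trivial, so it is the action groupoid of the normal representation), and $B\cong C$ is the paper's Morita equivalence of the transitive groupoid $G_O\toto O$ with $G_x$ via the bibundle $P_x$, which you spell out concretely as the gauge-groupoid and associated-bundle identifications $G_O\cong P_x\times_{G_x}P_x$ and $\nu(M,O)\cong P_x\times_{G_x}N_x$. The only difference is one of packaging: you carry out explicitly the computations that the paper delegates to the general theory of VB-groupoids and of representations under Morita equivalence.
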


\begin{proof}
Construction A) is better understood by thinking on the category of VB-groupoids over the restriction $G_O\toto O$ \cite[3.4]{dh1}. The linear model is just the cokernel of the VB-groupoid inclusion
$(TG_O\toto TO)\to(TG\r{G_O}\toto TM\r{O})$. Since both normal bundles have the same rank, the core of this VB-groupoid is trivial, and therefore it is the action groupoid of a representation, namely the normal representation of B). Finally, since $G_O\toto O$ is a transitive groupoid, it is Morita  equivalent to the isotropy group $G_x\toto x$, for $x\in O$, and then there is a 1-1 correspondence between their representations. 
The bibundle realizing this Morita equivalence is $P_x$, and the correspondence between representations follow a universal formula specialized in C).
\end{proof}

% comments on the equivalence

%Given $S\subset M$ a submanifold, the {\bf restriction} $G\r{S}\toto S$, where $S=s^{-1}(S)\cap t^{-1}(S)$, is a well-defined Lie groupoid in the following cases: (i) $S$ is transverse to the orbits, or $S$ is {\bf invariant}, meaning that it contains every orbit it meets.
%Given $x\in M$, its orbit 
%$O=O_x=\{y\r{} \exists y\xfrom g x\}$ is a submanifold, non-necessarily embedded, and the arrows $G\r{O}=s^{-1}(O)=t^{-1}(O)$ over the orbit $O$ give a well-defined Lie groupoid $G\r{O}\toto O$. More generally, given $S\subset M$, the {\bf } the restriction
%. We say that $S\subset M$ is {\bf invariant} if it contains every orbit it meets.
%The groupoid $G\toto M$ is {\bf proper} if the source-target map $(t,s):G\to M\times M$ is proper, and in that case the orbits are closed and embedded.

% linearization

The restriction $G_O\toto O$ embeds into the linear model as the 0-section. The Lie groupoid $G\toto M$ is {\bf linearizable}
around $O$ if there are opens neighborhoods $O\subset U\subset \nu(M,O)$ and $O\subset V\subset M$ and a linearization  isomorphism 
$$\phi:(\nu(G,G_O)\r{U}\toto U)\cong (G\r{V}\toto V)$$
The linearization is {\bf invariant} if $U$ and $V$ are so, namely if they contain every orbit they meet.

% examples

\begin{examples}
\begin{enumerate}
\item Source-proper groupoids, namely those whose source map $s:G\to M$ is a proper map, are invariantly linearizable, this is part of Thm. \ref{thm:linearization}. 
%This includes submersion groupoids from proper submersions, action groupoids from compact group actions, and holonomy groupoids from foliations with finite holonomy and compact fibers.
\item Submersion groupoids arising from fiber bundles (locally trivial submersions) are invariantly linearizable, and they are source-proper if and only if the fiber is compact.
\item Action groupoids from proper actions of Lie groups are also invariantly linearizable \cite[Thm. 2.4.1]{dk}, and they are source-proper if and only if the group is compact.
\end{enumerate}
\end{examples}

% necessary condition

The source-local triviality, which holds in the above examples, is in fact a necessary condition.

\begin{lemma}\label{lemma:necessary}
If $G\toto M$ is invariantly linearizable then it is source-locally trivial.
\end{lemma}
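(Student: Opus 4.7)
The plan is to transport the question to the linear model via the linearization isomorphism, where source-local triviality becomes transparent from the transitivity of $G_O \toto O$.

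Given $m \in M$ in an orbit $O$, by hypothesis there exist invariant open neighborhoods $O \subset U \subset \nu(M,O)$ and $O \subset V \subset M$ and a linearization isomorphism $\phi : (\nu(G, G_O)\r{U} \toto U) \cong (G\r{V} \toto V)$. The key observation will be that, by invariance of $V$, any arrow $g \in G$ with $s(g) \in V$ automatically has $t(g) \in V$, since $s(g)$ and $t(g)$ lie in the same orbit of $G$ and $V$ contains every orbit it meets. Consequently $s^{-1}(V) = G\r{V}$, and the source of $G$ restricted over the open $V \subset M$ coincides with the source of the Lie groupoid $G\r{V} \toto V$.

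Next I would use description (B) of Prop.~\ref{prop:local-model} to identify the linear model with the action groupoid $G_O \times_O \nu(M,O) \toto \nu(M,O)$, whose source is the second projection. This exhibits it as the pullback of $s : G_O \to O$ along the vector bundle projection $\nu(M,O) \to O$. Since $G_O \toto O$ is a transitive Lie groupoid, it is isomorphic to the gauge groupoid of a principal $G_x$-bundle, and in particular its source is a locally trivial fibration; being a pullback of this, so is the source of the linear model. The isomorphism $\phi$ intertwines the source maps, hence local triviality transports to the source of $G\r{V} \toto V$, which by the first step coincides with $s : G \to M$ restricted over $V$. Letting $m$ vary then concludes that $s : G \to M$ is locally trivial.

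The main subtlety, and really the only nontrivial ingredient, is the use of invariance of $V$ in the first step: without it, $s^{-1}(V)$ could strictly contain $G\r{V}$, and local triviality of the source of $G\r{V} \toto V$ would not suffice to conclude local triviality of $s : G \to M$ over $V$. Everything else is a routine transfer of bundle-theoretic facts between the linear model and $G \toto M$.
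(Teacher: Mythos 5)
Your proof is correct and follows essentially the same route as the paper: reduce to the linear model via the linearization isomorphism (using invariance of $V$ to identify $s^{-1}(V)$ with $G\r{V}$, a point the paper leaves implicit in ``working locally'') and observe that the linear model is source-locally trivial because $G_O\toto O$ is transitive. The paper reads this off directly from description C) of Prop.~\ref{prop:local-model}, while you pass through description B) together with the gauge-groupoid presentation of $G_O\toto O$ and pullback stability, which amounts to the same argument.
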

\begin{proof}
Working locally, it is enough to show that the linear model is source-locally trivial. And this follows easily from the description C) in Prop. \ref{prop:local-model}.
\end{proof}

% corollary

The main goal of the present paper is to understand which proper groupoids are invariantly linearizable besides the source-proper ones. A first remark in this direction is the following:

\begin{lemma}\label{lemma:orbit-type}
Let $G\toto M$ be proper with connected orbit space $M/G$. If $G$ is invariantly linearizable, then either it is source-proper, or none of its orbits are compact.
\end{lemma}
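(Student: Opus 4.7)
The plan is to study the $G$-saturated set
$$C = \{x \in M : s^{-1}(x) \text{ is compact}\} \subset M$$
and show that its image $C/G$ in $M/G$ is clopen, so by connectedness it is either $\emptyset$ or all of $M/G$. First I would note that $C$ is $G$-invariant: for any arrow $g : x \to y$, right multiplication by $g$ is a diffeomorphism $s^{-1}(y) \to s^{-1}(x)$, so the compactness of source fibers is a property of the orbit.

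To prove that both $C$ and $M \setminus C$ are open, I would fix a point $x$, set $O = Gx$, and use invariant linearizability to obtain invariant open neighborhoods $O \subset U \subset \nu(M,O)$ and $O \subset V \subset M$ with $G\r{V} \cong \nu(G,G_O)\r{U}$. Using description (B) of Prop. \ref{prop:local-model}, the source fiber of the linear model at a point $(y,v) \in \nu(M,O)$ is $G_O(-,y) = s^{-1}(y)$, which is diffeomorphic to $P_x$ since $y \in O$. The invariance of $U$ ensures that no arrow of the linear model starting in $U$ escapes $U$, so source fibers of $\nu(G,G_O)\r{U}$ coincide with those of $\nu(G,G_O)$. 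Transporting this through the isomorphism, every source fiber of $G$ at a point of $V$ is homeomorphic to $P_x$. Therefore $V \subset C$ if $P_x$ is compact, and $V \cap C = \emptyset$ otherwise; in particular $C$ is clopen in $M$.

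Two short steps close the argument. If $C = M$, every source fiber is compact, and by Lemma \ref{lemma:necessary} the map $s : G \to M$ is a locally trivial fibration; a fiber bundle with compact fibers over a manifold is proper, hence $G$ is source-proper. If instead some orbit $O$ is compact and $x \in O$, then properness of $G$ makes $G_x = (s,t)^{-1}(x,x)$ compact, and $t : P_x \to O$ is a principal $G_x$-bundle with compact base and compact structure group, so $P_x$ is compact and $x \in C$; this forces $C \ne \emptyset$, hence $C = M$ by the dichotomy, placing us in the source-proper case. The only mildly delicate point is matching the source fibers of the restricted linear model with those of $G$, which reduces to the invariance of $U$.
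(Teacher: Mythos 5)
Your proof is correct and follows essentially the same route as the paper: a clopen dichotomy on $M/G$ given by compactness of source fibers, with openness coming from (invariant) linearizability and the principal bundle $t:P_x\to O_x$ with compact structure group $G_x$ relating compactness of orbits and of source fibers. The only cosmetic difference is that you extract openness directly from the linearization isomorphism (checking that invariance of $U$ and $V$ makes the restricted source fibers agree with the full ones), whereas the paper invokes only source-local triviality from Lemma \ref{lemma:necessary}; both are fine.
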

\begin{proof}
For each $x\in M$ the isotropy bundle $G(-,x)\to O_x$ is a principal bundle with structure group $G_x$, which is compact for $G$ is proper. Then the source-fiber $G(-,x)$ is compact if and only if the corresponding orbit $O_x$ is so.
Since $G$ is invariantly linearizable the source map $s:G\to M$ is locally trivial and a source-fiber is compact if and only if the nearby ones are, and this is the case if and only if the groupoid is source-proper.
This shows that both the compact and the non-compact orbits define opens in the orbit space $M/G$, hence the result.
\end{proof}

% s-locally trivial is a necessary condition

Source-local triviality is a necessary condition for invariant linearization, and as expressed by Weinstein in \cite{w}, it is tempting to think that it is also sufficient,
%\cite[Rmk.5.5.5]{dh1} actually fell into this temptation. 
but \cite[Ex.10.1]{w} already showed a counter-example. 
Looking for a sufficient condition to ensure invariant linearization,
\cite[Def.4.12]{cs} proposes the notion of {\bf source inv-trivial}, see also Q. \ref{q:open}. 
We show here that this is also not enough, by relating local and global triviality with the following result.

\begin{lemma}\label{lemma:bundles}
A smooth fiber bundle $p:E\to B$ over a contractible base $B$ must be trivial.
\end{lemma}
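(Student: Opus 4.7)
The approach is to reduce the lemma to the homotopy invariance of smooth fiber bundles: for any smooth fiber bundle $\pi\colon X\to B\times[0,1]$, the restrictions $X|_{B\times\{0\}}$ and $X|_{B\times\{1\}}$ are isomorphic as bundles over $B$. Granting this, the argument is short. Since $B$ is a contractible smooth manifold, one can produce a smooth homotopy $H\colon B\times[0,1]\to B$ with $H_0=\mathrm{id}_B$ and $H_1\equiv b_0$ for some $b_0\in B$ (a continuous contraction can always be smoothed while preserving the endpoint values by a standard partition-of-unity or convolution argument). Pulling back yields $\tilde E=H^*E\to B\times[0,1]$, whose restrictions to $B\times\{0\}$ and $B\times\{1\}$ are $E$ and $B\times F$ respectively, where $F=p^{-1}(b_0)$. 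Homotopy invariance then gives $E\cong B\times F$.

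To establish homotopy invariance, I would construct a smooth vector field $\xi$ on $\tilde E$ projecting via $d\pi$ to $\partial/\partial t$ on $B\times[0,1]$, and use its time-$1$ flow as the desired isomorphism. Such a $\xi$ is assembled from local models: in any trivializing chart $\pi^{-1}(U\times J)\cong U\times J\times F$, take $\xi$ to be the horizontal $\partial/\partial t$ in that chart, then glue using a partition of unity subordinate to a locally finite trivializing cover of $B\times[0,1]$. The resulting flow $\Phi_s$ projects downstairs to translation by $s$ in the $t$-coordinate, so $\Phi_1$ maps $\tilde E|_{B\times\{0\}}$ diffeomorphically onto $\tilde E|_{B\times\{1\}}$ over $B$.

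The main obstacle is the completeness of $\xi$: the flow of a smooth vector field on a non-compact manifold need not exist for time $1$, and although the $B$-component of the flow is trivial, motion in the fiber direction induced by the transition functions could in principle escape any compact set in finite time. The standard way around this is to take $\xi$ to be the horizontal lift of $\partial/\partial t$ with respect to a \emph{complete} Ehresmann connection on $\tilde E$, whose existence on smooth fiber bundles over paracompact manifolds is classical (obtained by rescaling a patched connection via a proper function on $B$). Alternatively, and most efficiently, the statement follows from the main result of \cite{dh2}, which asserts exactly that a locally trivial submersion over a contractible manifold is trivial.
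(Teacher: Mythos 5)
Your proposal is correct and follows essentially the same route as the paper: pull back the bundle along a smooth contraction of $B$, equip the pullback with a complete Ehresmann connection (the paper cites \cite[Thm.3]{dh2} for its existence), and use the time-$1$ flow of the horizontal lift of $\partial/\partial t$ to identify the two end restrictions. You also correctly isolate the only real issue — completeness of the lifted vector field — which is exactly the point the paper handles via the complete connection; the only cosmetic difference is that the paper works over an open interval $I\supset\{0,1\}$ rather than the manifold with boundary $B\times[0,1]$.
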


\begin{proof}
Let $\{0,1\}\subset I\subset\R$ be an open interval and let $h: B\times I \to B$ be a smooth homotopy between $h_0(x)=x$ and $h_1(x)=b$ constant. 
The pullback bundle $h^*E=(B\times I)\times_B E \to B\times I$ admits a complete Ehresmann connection $H$ by \cite[Thm.3]{dh2}. The horizontal lift of $(0,\partial/\partial t )$ on $B\times I$ with respect to $H$ is a vector field $X$ whose flow $\varphi_1$ gives an isomorphism between
$h^*E\r{B\times 0}\cong h_0^*E=E$ and $h^*E\r{B\times 1}\cong h_1^*E=E_b\times B$, proving that $E$ is indeed trivial.
\end{proof}

% counter-example

As a counter-example to Q. \ref{q:open} we propose the submersion groupoid associated to a map in \cite[Ex.40]{m}, which is similar to the one originally given by Weinstein \cite[Ex.10.1]{w}.

\begin{example}\label{ex:counter}
Let $V\subset \R^4$ be an open subset homeomorphic but not diffeomorphic to $\R^4$, and let  
$E=\{(v,t)\in \R^4\times \R : v\in V\text{ or }t\neq 0\}$.
Then $E$ is a 5-dimensional Euclidean open, contractible, and simply connected at infinity, and therefore diffeomorphic to $\R^5$ \cite[Ex.37]{m}. The projection $\pi:E\to\R$ is not locally trivial, for the fiber $E_0\cong V$ is not diffeomorphic to the others.
Consider the product between the submersion groupoid of $\pi$ and the group $SU(2)$.
$$(G\toto M)=(E\times_\R E\toto E)\times(SU(2)\toto\ast)$$
It is shown in \cite[Ex.40]{m} that the source map $s:G\to M$ is locally trivial, and it follows from our Lemma \ref{lemma:bundles} that is globally trivial too, in particular source inv-trivial. But there is not an invariant linearization of $G\toto M$, for its orbits identify with the fibers $E_t$ of $\pi$, and the orbits of the linear model $\nu(G,G_{E_0})$ are all diffeomorphic to $E_0=V$. 
\end{example}

%
%%%%%%%%%%%%%%%%%%%%%%%%%%%%%%%%

\section{Completeness as a sufficient condition}

% overview

We recall Riemannian submersions and Riemannian groupoids and discuss some preliminaries. Then we present a sufficient condition for invariant linearization, and derive the tube theorem for proper actions of non-compact groups as a corollary. 

\medskip

\def\ov{\overline}

% riemannian submersions

Given $\pi: E \to B$ a submersion, a Riemannian metric $\eta^E$ on $E$ is {\bf fibered} if for all $e,e'$ in the same fiber $E_b=\pi^{-1}(b)$ the composition 
$T_eE_b^{\perp} \to T_b B \from T_e'E_b^{\perp}$ is an isometry. Equivalently,
 $\eta^E$ is fibered if it induces a metric $\eta^B$ on $B$ so that $\pi$ becomes a {\bf Riemannian submersion}, namely $\ov{d\pi_e}:\nu_e(E,E_b)\cong T_eE_b^{\perp} \to T_b B$ is an isometry for every $e\in E$.  
In such a Riemannian submersion, if a geodesic $\gamma$ on $E$ is orthogonal to a fiber then it is orthogonal to every fiber, and its projection is a geodesic \cite[Cor.2]{o}. In this case we say that $\gamma$ is a {\bf horizontal geodesics}.
If $S\subset B$ is embedded and $S'=\pi^{-1}(S)$, the exponential maps yield a commutative square,
$$\xymatrix{
\nu(E,S')\supset U' \ar[r]^(.8){\exp^{E}} \ar[d]_{\ov{d\pi}} & E \ar[d]^\pi \\
\nu(B,S) \supset U \ar[r]_(.7){\exp^{B}} & B
}$$
where $\nu(E,S')\cong TS'^\bot\subset TE$, $\ov{d\pi}=d\pi\r{\nu(E,S')}$, and $U'$ and $U$ are opens around the 0-sections satisfying $\ov{d\pi}(U')\subset U$. The following is a sharper vesion of \cite[Prop.5.9]{dhf}:

\begin{lemma}
\label{lemma:admissible}
\begin{enumerate}[i)]
\item If $\exp^{B}\r{U}$ is \'etale then so does $\exp^{E}\r{U'}$, and the converse holds if $\ov{d\pi}(U')=U$.
\item If $\exp^{B}\r{U}$ is injective then so does $\exp^{E}\r{U'}$, and the converse holds if $\eta^E$ is complete and $U'=\ov{d\pi}^{-1}(U)$. 
\end{enumerate}
\end{lemma}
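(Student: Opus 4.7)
The plan is to reduce everything to classical Riemannian submersion theory \cite{o}: horizontal geodesics of $E$ project to geodesics of $B$ and, conversely, every geodesic of $B$ lifts uniquely to a horizontal geodesic starting at any prescribed point of its initial fiber; moreover, when $\eta^E$ is complete, horizontal parallel transport along a curve in $B$ is a global diffeomorphism between $\pi$-fibers. I will also use that $\ov{d\pi}:\nu(E,S')\to\nu(B,S)$ is itself a submersion (fiberwise a linear isomorphism over the submersion $\pi|_{S'}:S'\to S$), whose fiber over $\xi\in\nu(B,S)$ is parametrised by $e'\in\pi^{-1}(\mathrm{base}(\xi))\mapsto \ov{d\pi}|_{\nu_{e'}}^{-1}(\xi)$. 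The crucial observation is that $\exp^E$ restricted to such a fiber of $\ov{d\pi}$ is exactly time-one horizontal parallel transport along the geodesic $t\mapsto\exp^B(t\xi)$, hence a local diffeomorphism between $\pi$-fibers (globally defined when $\eta^E$ is complete).

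For part (i), if $\exp^B|_U$ is \'etale and $w\in\ker d\exp^E|_v$, then commutativity of the square yields $d\exp^B(d\ov{d\pi}(w))=0$, so $d\ov{d\pi}(w)=0$; thus $w$ is tangent to a fiber of $\ov{d\pi}$, on which $\exp^E$ is a local diffeomorphism, and so $w=0$. A dimension count upgrades this injectivity of $d\exp^E|_v$ to the \'etale property. For the converse, assuming $\ov{d\pi}(U')=U$, the composition $\pi\circ\exp^E|_{U'}=\exp^B\circ\ov{d\pi}|_{U'}$ is a submersion (local diffeomorphism composed with submersion), and since $\ov{d\pi}|_{U'}:U'\to U$ is a surjective submersion, the chain rule forces $\exp^B|_U$ itself to be a submersion, hence \'etale by dimension.

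For part (ii), the forward direction uses uniqueness of horizontal lifts: if $\exp^E(v_1)=\exp^E(v_2)$, applying $\pi$ and using injectivity of $\exp^B$ gives $\ov{d\pi}(v_1)=\ov{d\pi}(v_2)=:\xi$, so $t\mapsto\exp^E(tv_i)$ are two horizontal lifts of $\exp^B(t\xi)$ coinciding at $t=1$, hence everywhere, and $v_1=v_2$. For the converse, assume $\eta^E$ complete and $U'=\ov{d\pi}^{-1}(U)$. Given $\xi_1,\xi_2\in U$ with $\exp^B(\xi_1)=\exp^B(\xi_2)=:b$, pick any lift $v_1\in U'$ of $\xi_1$, set $p:=\exp^E(v_1)\in\pi^{-1}(b)$, and use completeness to horizontally parallel-transport $p$ backward along $t\mapsto\exp^B(t\xi_2)$, arriving at some $e_2\in\pi^{-1}(\mathrm{base}(\xi_2))$. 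The vector $v_2:=\ov{d\pi}|_{\nu_{e_2}}^{-1}(\xi_2)$ lies in $U'$ thanks to $U'=\ov{d\pi}^{-1}(U)$ and satisfies $\exp^E(v_2)=p=\exp^E(v_1)$ by construction, so injectivity of $\exp^E|_{U'}$ gives $v_1=v_2$ and hence $\xi_1=\xi_2$.

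The main technical difficulty is tracking the domain conditions: the asymmetric hypotheses $\ov{d\pi}(U')=U$ in the converse of (i) and $\eta^E$ complete together with $U'=\ov{d\pi}^{-1}(U)$ in the converse of (ii) are exactly what is needed to keep all auxiliary lifts in the prescribed opens. Completeness is genuinely essential in the converse of (ii): without it the inverse horizontal parallel transport could escape $E$ before reaching $\pi^{-1}(\mathrm{base}(\xi_2))$, and the argument breaks down.
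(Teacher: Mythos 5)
Your proof is correct and follows essentially the same route as the paper's: both rest on the commutative exponential square together with the identification of $\exp^E$ restricted to a fiber of $\ov{d\pi}$ with horizontal parallel transport along the base geodesic, and your argument for the converse of (ii) (backward horizontal transport of $\exp^E(v_1)$ along $t\mapsto\exp^B(t\xi_2)$, made global by completeness) is exactly the paper's. The only cosmetic difference is in (i), where the paper packages the kernel/dimension bookkeeping as a morphism of short exact sequences and you argue element-wise with a dimension count and a chain-rule step for the converse; the content is the same.
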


\begin{proof}
%The proof is essentially the same as that of Prop. \cite[Prop.5.9]{dhf}. 
Given $(e,w)\in U'$, writing $\ov{d\pi}(e,w)=(b,v)$, $\exp^E(e,w)=\tilde e$ and $\exp^B(b,v)=\tilde b$, we have the following map of short exact sequences:
$$
\xymatrix{0\ar[r]& \ker_{(e,w)} d({\ov{d\pi}})\ar[r]\ar[d] & T_{(e,w)}U'  \ar[r]^{d({\ov{d\pi}})}\ar[d]^{d\exp^E} & T_{(b,v)}U \ar[d]^{d\exp^B} \ar[r] &0\\
                0      \ar[r] & \ker_{\tilde e} d\pi \ar[r]    &  T_{\tilde e}E \ar[r]^{d\pi} & T_{\tilde b} B \ar[r]   &  0}$$
The first vertical arrow is an isomorphism, it identifies with the differential of the parallel transport 
\smash{$E_b\cong \ov{d\pi}^{-1}(b,v)\xto{\exp^E} E_{\tilde b}$}
over the geodesic $\exp^B(b,\epsilon v)$. It follows that the second vertical arrow is an isomorphism if and only if the third one is, hence i). 

Suppose that $\exp^{E}(e,w)=\exp^E(e',w')$ with $(e,w),(e',w')\in U'$. Then
$\exp^B(\pi(e),\ov{d\pi}(w))=\exp^B(\pi(e'),\ov{d\pi}(w'))$ and, if
$\exp^{B}\r{U}$ is injective, 
$(\pi(e),\ov{d\pi}(w))=(\pi(e'),\ov{d\pi}(w'))$. The geodesics $\exp^E(e,\epsilon w)$ and $\exp^E(e',\epsilon w')$ have then the same projection and the same value at $1$, so they are equal and $(e,w)=(e',w')$, proving that $\eta^E\r{U'}$ is injective. Next we prove the converse.

Consider $(b,v),(b',v')\in U$ with $\exp^{B}(b,v)=\exp^B(b',v')$. Picking $(e,w)\in U'$ such that $\ov{d\pi}(e,w)=(b,v)$, the geodesic $\exp^E(e,\epsilon w)$ is a horizontal lift of $\exp^B(b,\epsilon v)$. If $\eta^E$ is complete we can lift $\exp^B(b',\epsilon v')$ to a horizontal geodesic $\gamma$ such that $\gamma(1)=\exp^E(e,w)$. Then $(\gamma(0),\dot\gamma(0))\in \ov{d\pi}^{-1}(U)=U'$ and satisfy $\exp^E(\gamma(0),\dot\gamma(0))=\gamma(1)=\exp^E(e,w)$. Since $\exp^E\r{U'}$ is injective we conclude that $(\gamma(0),\dot\gamma(0))=(e,w)$ and that 
$(b',v')=\ov{d\pi}(\gamma(0),\dot\gamma(0))=\ov{d\pi}(e,w)=(b,v)$.
\end{proof}

% groupoid metrics 

Given a Lie groupoid $G\toto M$, write $G_2=G\times_M G$ for the manifold whose points are pairs of composable arrows, or equivalently commutative triangles. There is a canonical action $S_3\action G_2$ which permutes the vertices of a triangle. A {\bf 2-metric} is a metric $\eta_2$ on $G_2$ that is fibered for the multiplication $m:G_2\to G$ and invariant under the $S_3$-action.
A 2-metric $\eta_2$ induces metrics $\eta_1$ on $G$ and $\eta_0$ on $M$ such that the following hold \cite{dhf}:
\begin{itemize}
\item $m,\pi_1,\pi_2:G_2\to G$ and $s,t:G\to M$ are Riemannian submersions;
\item $u(M)\subset G$ is totally geodesic;
\item $\eta_0$ is a {\bf 0-metric}, namely it is invariant under the normal representation, in the sense that for every $y\xfrom g x$ the linear map $\rho_g:\nu(M,O)_x\to\nu(M,O)_y$ is an isometry; and
\item $\eta^M$ makes the foliation by orbits a singular Riemannian foliation.
\end{itemize}

% main theorems

The main theorems on groupoid metrics in \cite{dhf} are: (i) every proper groupoid admits a 2-metric, and (ii) the exponential maps of $\eta_1,\eta_0$ yield groupoid linearizations around orbits and, more generally, invariant submanifolds. 
The normal vectors in $\nu(G,G_O)$ and $\nu(M,O)$ give rise to {\bf normal geodesics}, namely geodesics on $G$ that are both horizontal for the source and target, and geodesics on $M$ that are orthogonal to the orbits. 
We will show here that if a groupoid metric is complete then the resulting linearization is invariant.

% tubes

Given $(M,\eta)$ a Riemannian manifold, $S\subset M$ a submanifold and $r>0$, 
write $B'(S,r)\subset\nu(M,S)$ for the {\bf infinitesimal tube} around $S$ of radius $r$, namely the normal vectors with norm smaller than $r$, and write $B(S,r)\subset M$ for the {\bf tube} around $S$ of radius $r$, namely the points whose distance at $S$ is smaller than $r$.
If $\eta$ is complete and $S$ is closed, then the distance from a point $s$ to $S$ can be realized by a geodesic orthogonal to $S$, and therefore $\exp(B'(S,r))=B(S,r)$. 

\begin{proposition}\label{prop:tube}
Given $G\toto M$ a Lie groupoid and $O\subset M$ an orbit, then:
\begin{enumerate}[i)]
\item If $\eta_0$ is a 0-metric then the infinitesimal tubes $B'(O,r)$ are invariant in the linear model;
\item If $\eta_2$ is a 2-metric then $B'(G_O,r)=\ov{ds}^{-1}(B'(O,r))=\ov{dt}^{-1}(B'(O,r))=\nu(G,G_O)\r{B'(O,r)}$;
\item If $\eta_2$ is a 2-metric, $G\toto M$ is proper, and the induced 0-metric $\eta_0$ is complete, then the tubes $\exp^{\eta_0}(B'(O,r))=B(O,r)$ are invariant in $G\toto M$.
\end{enumerate}
\end{proposition}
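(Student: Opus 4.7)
My plan is to prove the three parts in order, each drawing on a different aspect of the Riemannian groupoid structure furnished by $\eta_2$.

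For (i), I would identify the linear model with the action groupoid of the normal representation, as in description (B) of Prop.~\ref{prop:local-model}. The 0-metric condition is precisely the statement that each $\rho_g:\nu(M,O)_x\to\nu(M,O)_y$ is a linear isometry, so the action preserves fiberwise norms. Since $B'(O,r)$ consists exactly of vectors of norm less than $r$, it is manifestly invariant.

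For (ii), I would combine the equalities $G_O=s^{-1}(O)=t^{-1}(O)$ with the Riemannian submersion property of $s,t:G\to M$ supplied by the 2-metric. A short tangent-space calculation then yields that $\overline{ds}$ and $\overline{dt}$ restrict to fiberwise isometries $\nu(G,G_O)\to\nu(M,O)$, which immediately gives the first two equalities. The third equality is the definition of the groupoid restriction of the linear model to the invariant set $B'(O,r)$, whose invariance has been established in (i).

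For (iii), I would first identify $\exp^{\eta_0}(B'(O,r))=B(O,r)$ using standard Riemannian geometry: orbits of proper groupoids are closed, and for a closed submanifold in a complete manifold every distance is realized by an orthogonal minimizing geodesic. For invariance, given $g:x\to y$ with $x\in B(O,r)$, I would write $x=\exp^{\eta_0}(x_0,v)$ with $x_0\in O$ and $|v|<r$, form the reverse geodesic $\beta(\tau)=\exp^{\eta_0}(x_0,(1-\tau)v)$ from $x$ to $x_0$, and lift it horizontally through $s$ starting at $g$ to get $\tilde\beta:[0,1]\to G$. Then $g_0=\tilde\beta(1)$ has source in $O$, so its target $y_0$ also lies in $O$; and since $t:G\to M$ is a Riemannian submersion, hence $1$-Lipschitz, the projected curve $t\circ\tilde\beta$ from $y$ to $y_0$ has length at most $|v|<r$, giving $d(y,O)<r$ and $y\in B(O,r)$.

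The main obstacle will be ensuring that the horizontal lift $\tilde\beta$ extends to the entire interval $[0,1]$, and this is where properness of $G$ and completeness of $\eta_0$ must be combined. If $\tilde\beta$ were only maximally defined on $[0,a)$ with $a<1$, then its $s$-projection would lie in the compact set $\beta([0,a])$, while $t\circ\tilde\beta$ would have length at most $a|v|$ and so stay inside a closed $\eta_0$-ball around $y$, which is compact by Hopf--Rinow. Properness of $G$, in the form of $(s,t):G\to M\times M$ being proper, would then force $\tilde\beta([0,a))$ to lie in a compact subset of $G$, so the horizontal-lift ODE can be continued past $a$, contradicting maximality. This bootstrap---turning $\eta_0$-completeness into a form of horizontal completeness for $s$ via properness---is the key technical point I anticipate having to verify carefully.
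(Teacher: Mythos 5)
Your proof is correct and follows the same strategy as the paper: parts (i) and (ii) are handled identically (isometry of the normal representation, and the fact that $\overline{ds},\overline{dt}$ are fiberwise isometries on normal bundles because $s,t$ are Riemannian submersions), and part (iii) uses the same mechanism of lifting a minimizing normal geodesic through the source and projecting by the target. The one genuine difference is in how the global existence of the $s$-horizontal lift is justified: the paper simply cites \cite[Prop.10]{dhdm}, whereas you prove it on the spot via an escape-to-infinity argument --- the $s$-projection of the lift stays in the compact image of the geodesic, the $t$-projection stays in a closed ball of radius $\lVert v\rVert$ around $y$ (compact by Hopf--Rinow, since $\eta_0$ is complete), and properness of $(s,t):G\to M\times M$ then traps the lift in a compact subset of $G$, so the lifting ODE extends. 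This bootstrap is exactly the content of the cited result in the case needed here, and it is where both hypotheses (properness and completeness of $\eta_0$) enter; making it explicit is a worthwhile addition, and your use of the reversed geodesic $\beta(\tau)=\exp^{\eta_0}(x_0,(1-\tau)v)$ so that the lift starts at $g$ is in fact cleaner than the paper's slightly informal ``lift starting at $g$'' of a geodesic ending at $x$.
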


\begin{proof}
i) is immediate, for the normal representation preserves the norm of the normal vectors. ii) is also easy, for in this case the source and target map are Riemannian submersions, and therefore the norm of normal vectors are preserved. Regarding iii), let $r>0$ and let $O'$ be another orbit of $G$ that intersects $B(O,r)$ in some $x\in M$. Let us show that
for any $y\xfrom g x$ the point $y$ is also in the tube $B(O,r)$.
Since $G$ is proper the orbit $O'$ is closed, and since $\eta_0$ is complete there is a normal geodesic $a$ realizing the distance between $O$ and $x$, so $a(0)\in O$, $a(1)=x$ and $\nrm{\dot a(0)}<r$. 
By \cite[Prop.10]{dhdm} the normal geodesic $a$ admits a global lift through the source $\tilde a$ starting at $g$. Then the projection via the target $t\circ \tilde a$ is a normal geodesic of length smaller than $r$ and connecting $O$ and $y$, hence proving that $y\in B(S,r)$.
\end{proof}

% main result of the section

We are now ready to prove our first main result, giving a sufficient condition for invariant linearization, in terms of completeness of compatible metrics.

\begin{theorem}\label{thm:sufficient}
Let $G\toto M$ be a proper groupoid and $\eta_2$ a 2-metric such that the induced 0-metric $\eta_0$ is complete. Then $G\toto M$ is invariantly linearizable around its orbits.
\end{theorem}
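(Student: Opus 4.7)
The plan is to produce invariant neighborhoods by restricting the linearization provided by Theorem \ref{thm:riem-grpd-lin} to infinitesimal tubes $B'(O,r)$ of sufficiently small radius. By Proposition \ref{prop:tube}(i), $B'(O,r)\subset \nu(M,O)$ is automatically invariant, and by Proposition \ref{prop:tube}(iii), the completeness of $\eta_0$ together with properness of $G$ give $\exp^{\eta_0}(B'(O,r))=B(O,r)$, which is also invariant. So the task reduces to showing that, for some $r>0$, the map $\exp^{\eta_0}$ restricts to a diffeomorphism $B'(O,r)\to B(O,r)$, and then propagating this base-level statement to the arrow level via Lemma \ref{lemma:admissible}.

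To find such an $r$, I would first apply Theorem \ref{thm:riem-grpd-lin} to obtain (possibly non-invariant) opens $O\subset U\subset \nu(M,O)$ and $O\subset V\subset M$ with $\exp$ a groupoid isomorphism $U\cong V$; in particular $\exp^{\eta_0}$ is \'etale and injective on $U$. Let $W\subset\nu(M,O)$ denote the largest open neighborhood of $O$ on which $\exp^{\eta_0}$ is simultaneously \'etale and locally injective; then $U\subset W$. The crucial technical step is to show that $W$ is invariant under the normal representation. For any $g\colon x\to y$ in $G_O$ and $v\in \nu_x(M,O)$, the proof of Proposition \ref{prop:tube}(iii) already invokes a source-horizontal lift $\tilde\gamma$ of the normal geodesic $\gamma_v$ starting at $g$, such that $t\circ\tilde\gamma$ is the normal geodesic associated to $\rho_g(v)$. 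Since $s,t\colon G\to M$ are Riemannian submersions and $u(M)\subset G$ is totally geodesic, this lift/project procedure intertwines the Jacobi-field equations at $v$ and at $\rho_g(v)$ by fiberwise isometries, so both \'etaleness and local injectivity of $\exp^{\eta_0}$ transfer from $v$ to $\rho_g(v)$.

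Once $W$ is known to be invariant, the required uniformity of $r$ comes for free: pick any $x\in O$ and any $r>0$ with $B'(x,r)\subset W$; then the fiberwise isometry of the normal representation combined with the transitivity of $G_O$ on $O$ force $B'(y,r)\subset W$ for every $y\in O$, hence $B'(O,r)\subset W$. Setting $\tilde U=B'(O,r)$ and $\tilde V=B(O,r)$ (both invariant), the map $\exp^{\eta_0}\colon\tilde U\to\tilde V$ is a diffeomorphism. Proposition \ref{prop:tube}(ii) identifies $\overline{ds}^{-1}(\tilde U)=B'(G_O,r)$, and Lemma \ref{lemma:admissible} applied to $s$ propagates \'etaleness and injectivity to $\exp^{\eta_1}\colon B'(G_O,r)\to G$; the image is contained in $G|_{\tilde V}$, and surjectivity onto $G|_{\tilde V}$ follows by backward-lifting normal geodesics on $M$ through the source using \cite[Prop.~10]{dhdm} (which needs only the completeness of $\eta_0$, as in Proposition \ref{prop:tube}(iii)). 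This yields the invariant linearization $(\nu(G,G_O)|_{\tilde U}\toto\tilde U)\cong(G|_{\tilde V}\toto\tilde V)$. The main obstacle I expect is verifying invariance of $W$: the intuition from Riemannian submersions is clear, but one has to check that the lift/project procedure actually commutes with $d\exp^{\eta_0}$, which amounts to a Jacobi-field computation combined with O'Neill's formulas. Everything else is tubular-neighborhood bookkeeping on top of Propositions \ref{prop:tube} and Lemma \ref{lemma:admissible}.
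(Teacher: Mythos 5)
Your overall architecture matches the paper's: produce the invariant opens as $B'(O,r)$ and $B(O,r)$ via Proposition \ref{prop:tube}, show $\exp^{\eta_0}$ is a diffeomorphism between them, and propagate to arrows with Lemma \ref{lemma:admissible}. The \'etale part of your argument is sound: invariance of the \'etale locus under the normal representation is exactly what Lemma \ref{lemma:admissible}(i), applied first to $t$ and then to $s$, delivers (the paper does this rather than a Jacobi-field computation, which is cleaner since the lemma is already in place). The uniformization of $r$ along $O$ from invariance plus norm-preservation is also fine.

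The genuine gap is global injectivity of $\exp^{\eta_0}$ on $B'(O,r)$. Your set $W$ is defined by \'etaleness and \emph{local} injectivity (the latter is implied by the former, so it adds nothing), and at the end you assert that $\exp^{\eta_0}\colon B'(O,r)\to B(O,r)$ is a diffeomorphism. But a locally injective local diffeomorphism on a tube around a \emph{non-compact} orbit need not be injective: two normal geodesics emanating from points of $O$ that are far apart in $O$ but whose images come close in $M$ can collide, and since $O$ is non-compact you cannot shrink $r$ uniformly to rule this out. (The non-compact case is precisely the one at stake here, by Lemma \ref{lemma:orbit-type}.) Nothing in your proposal addresses this. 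The paper's mechanism is the crux of its proof: fix a relatively compact window $W\subset O$ around $x_0$, choose $r<\tfrac12 d(x_0,O\setminus W)$ so that $\exp^{\eta_0}\r{B'(W,r)}$ is an embedding; then, given $\exp^{\eta_0}(x,v)=\exp^{\eta_0}(x',v')$, translate $(x,v)$ to $(x_0,v_0)$ by an arrow, lift the geodesic of $(x',v')$ through the source to a global normal geodesic $\gamma$ (this is where completeness via \cite[Prop.10]{dhdm} enters the injectivity step, not only the surjectivity step), and use the triangle inequality $d(x_0,(t\circ\gamma)(0))<2r$ to force the competitor into $B'(W,r)$, where injectivity is known. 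Without this (or an equivalent) reduction to a compact window, the proof is incomplete.
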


\begin{proof}
Let $O\subset M$ be an orbit. 
Fix $x_0\in O$, take $x_0\in W\subset O$ a relatively compact neighborhood, and pick $0<r<\frac{1}{2}d(x_0,O\setminus W)$ such that 
$\exp^{\eta_0}\r{B'(W,r)}$ is an embedding. We will show now that $\exp^{\eta_0}$ is then an embedding over the whole infinitesimal tube $B'(O,r)$.

{\it $\exp^{\eta_0}\r{B'(O,r)}$ is \'etale:} Given $(x,v)\in B'(O,r)$, take 
$(x_0,v_0)\xfrom {(g,w)} (x,v)$ in $\nu(G,G_O)\r{B'(O,r)}=B'(G_O,r)$, see Prop. \ref{prop:tube}. 
The normal geodesics are defined for all time by \cite[Prop.10]{dhdm}, and $\exp^{\eta_0}:B'(W,r)\to M$ is an open embedding. It follows from Lemma \ref{lemma:admissible} applied to the target map that $\exp^{\eta_1}: \ov{dt}^{-1}(B'(W,r))\to G$ is also an open embedding. The same Lemma, now applied to the source map, says that $\exp^{\eta_0}:\ov{ds}(\ov{dt}^{-1}(B'(W,r)))\to M$ is at least \'etale, and therefore $d\exp^{\eta_1}:T_{(g,w)}\nu(G,G\r{O})\to T_{\exp^{\eta_1}(g,w)}G$ is a linear isomorphism. %Note that $\ov{dt}^{-1}(B'(W,r))$ is not saturated for $\ov{ds}$ and this is why injectiveness does not follow directly from the lemma.

{\it $\exp^{\eta_0}\r{B'(O,r)}$ is injective:} Let $(x,v),(x',v')\in B'(O,r)$ be such that $\exp^{\eta_0}(x,v)=\exp^{\eta_0}(x',v')$. As before, take
\smash{$(x_0,v_0)\xfrom {(g,w)} (x,v)$} in $B'(G_O,r)$. 
Since $s(\exp^{\eta_1}(g,w))=\exp^{\eta_0}(x',v')$, we can lift the geodesic $\exp^{\eta_0}(x',\epsilon v')$ through the source to a normal geodesic $\gamma$ such that $\gamma(1)=\exp^{\eta_1}(g,w)$. 
We claim that the projection $t\circ\gamma$ starts in $W$. This is because $\exp^{\eta_0}(x_0,v_0)=t(\exp^{\eta_1}(g,w))=t(\gamma(1))=\exp^{\eta_0}((t\circ\gamma)(0),\dot{(t\circ\gamma)}(0))$, and therefore
$$d(x_0,(t\circ\gamma)(0))\leq d(x_0,\exp^{\eta_0}(x_0,v_0))+d((t\circ\gamma)(1),(t\circ\gamma)(0))\leq \n{v_0}+\n{\dot{(t\circ\gamma)}(0)}<2r.$$
By construction $\exp^{\eta_0}$ is injective over $B'(W,r)$, from where $(x_0,v_0)=((t\circ\gamma)(0),\dot{(t\circ\gamma)}(0))$ and $\gamma(\epsilon)=\exp^{\eta_1}(g,\epsilon w)$.
Finally, projecting via the source, we conclude that 
$\exp^{\eta_0}(x',\epsilon v')=s(\gamma(\epsilon))=s(\exp^{\eta_1}(g,\epsilon w))=\exp^{\eta_0}(x,\epsilon v)$ and that $(x',v')=(x,v)$.

We have that $(M,\eta_0)$ is complete, that the geodesics normal to the orbits in $G$ are defined for all time, see Prop. \cite[Prop.10]{dhdm}, and we have just seen that $\exp^{\eta_0}\r{B'(O,r)}$ is an open embedding. It follows from the proof in \cite[Thm.5.11]{dhf} that 
we can actually take $U=B'(O,r)$ and get a linearization isomorphism
$$\exp:(\nu(G,G\r{O})\r{B'(O,r)}\toto B'(O,r))\to (G\r{B(O,r)}\toto B(O,r))$$
By Prop. \ref{prop:tube} $B'(G_O,r)$ and $\exp^{\eta_0}(B'(O,r))=B(O,r)$ are both invariant, hence the result.
\end{proof}

% completenes along Riemannian submersions 

The criterion presented in the previous theorem allows us to derive the linearization of proper actions of non-compact groups as a corollary of the linearization of groupoids. We need the following simple remark on completeness of pushed forward metrics (cf. \cite[Thm.1]{h}).

\begin{lemma}\label{lemma:quotient}
If $\pi:(E,\eta^E)\to (B,\eta^B)$ is a Riemannian submersion and $\eta^E$ is complete then so does $\eta^B$.
In particular, if $K$ is a Lie group, $(M,\eta^M)$ is complete and $K \action M$ is a free and proper isometric action then the quotient $M/K$ inherits a complete metric.
\end{lemma}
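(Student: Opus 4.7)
The plan is to reduce the main statement to geodesic completeness and then apply the Hopf--Rinow theorem. The key ingredient I would use is the classical fact, already invoked earlier in the paper (see the discussion preceding Lemma \ref{lemma:admissible} and the reference to O'Neill), that along a Riemannian submersion $\pi\colon E\to B$ a geodesic of $E$ that starts horizontal stays horizontal, and projects to a geodesic on $B$ of the same speed.

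First I would pick an arbitrary geodesic $\gamma\colon[0,T)\to B$ with $\gamma(0)=b$ and $\dot\gamma(0)=v$, and show that $\gamma$ extends to all of $\R$. Choose any $e\in \pi^{-1}(b)$ and let $\tilde v\in T_eE_b^{\perp}$ be the unique horizontal lift with $\overline{d\pi}_e(\tilde v)=v$. Because $\eta^E$ is complete, Hopf--Rinow on $E$ guarantees that the geodesic $\tilde\gamma(t)=\exp_e^E(t\tilde v)$ is defined for all $t\in\R$. By the aforementioned property, $\tilde\gamma$ remains horizontal for all time and $\pi\circ\tilde\gamma$ is a geodesic on $B$; it agrees with $\gamma$ in position and velocity at $t=0$, so by uniqueness of geodesics it extends $\gamma$ to all of $\R$. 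Thus $(B,\eta^B)$ is geodesically complete, hence metrically complete by the other direction of Hopf--Rinow.

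For the ``in particular'' clause, the hypotheses that $K\action M$ is free, proper, and isometric ensure that $M/K$ is a smooth manifold, that $\pi\colon M\to M/K$ is a principal $K$-bundle, and that $\eta^M$ descends to a unique Riemannian metric $\eta^{M/K}$ on $M/K$ for which the orthogonal complement to the $K$-orbits is the horizontal distribution and $\pi$ is a Riemannian submersion. Completeness of $\eta^{M/K}$ then follows from the first part applied to this submersion.

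I do not anticipate a substantive obstacle: the argument is a direct combination of horizontal lifting of geodesics with Hopf--Rinow. The only minor subtlety worth flagging is that the horizontal lift has to be defined for all time, which is precisely where completeness of $\eta^E$ enters; without it one only obtains local extensions of $\gamma$ and the conclusion can fail.
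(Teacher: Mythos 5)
Your argument is correct and is essentially the paper's own proof: take a horizontal lift of (the initial data of) a geodesic in $B$, extend it for all time using completeness of $\eta^E$, observe it stays horizontal and projects to a geodesic extending the original one. The only difference is that you spell out the O'Neill horizontality argument and the Hopf--Rinow reduction explicitly, which the paper leaves implicit.
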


\begin{proof}
Given a geodesic $a$ in $B$, if $\tilde a$ is some local horizontal lift, then it can be extended to every time because $E$ is complete, and the projection gives an extension of $a$. 
\end{proof}

% gauge trick for action groupoids 

We will now cook up a complete 2-metric on an action groupoid coming from a proper action, using the {\bf gauge-trick} from \cite{dhf,dhf2}, which combined with Thm. \ref{thm:sufficient}, frames the Tube theorem into the theory.

\begin{corollary}\label{coro:tube}
If $K\action M$ is a proper Lie group action, then the action groupoid 
$K\times M\toto M$ is invariantly linearizable around its orbits.
\end{corollary}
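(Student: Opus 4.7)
The plan is to apply Theorem \ref{thm:sufficient}, so the task reduces to producing a 2-metric $\eta_2$ on the action groupoid $G = K \times M \toto M$ whose induced 0-metric $\eta_0$ on $M$ is complete.

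First I would construct a complete $K$-invariant Riemannian metric $\eta^M$ on $M$. Since $K\action M$ is proper, the slice theorem for proper actions yields tubular neighborhoods equivariantly diffeomorphic to $K\times_{K_x} V_x$, and averaging a local metric over such tubes with a $K$-invariant partition of unity produces a $K$-invariant Riemannian metric on $M$. Completeness is then arranged by conformally rescaling using a $K$-invariant proper exhaustion function, which exists because one can pull back a proper continuous function from the paracompact Hausdorff quotient $M/K$ and smooth it. This is a Nomizu--Ozeki-type step performed inside the $K$-invariant category.

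Next I would apply the gauge trick of \cite{dhf,dhf2} to lift $\eta^M$ to a 2-metric $\eta_2$ on $G$. The input data are $\eta^M$ together with any left-invariant Riemannian metric on $K$, the latter controlling the source-fiber direction of $G\cong K\times M$. Pulling back along $\pi_2:G_2\cong K\times K\times M\to G$ produces a preliminary metric on $G_2$ which is fibered over $\pi_2$, and symmetrizing over the $S_3$-action on $G_2$ that permutes the three vertices of a composable triangle yields a genuine 2-metric. The $K$-invariance of $\eta^M$ is precisely what ensures that the resulting symmetrized metric is simultaneously compatible with $m$, $\pi_1$ and $\pi_2$; and the construction can be arranged so that the induced 0-metric on $M$ agrees with $\eta^M$, so that it remains complete.

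With $(G\toto M, \eta_2)$ in hand, Theorem \ref{thm:sufficient} yields an invariant linearization of $G\toto M$ around each of its orbits, which is the tube theorem. The main obstacle is the second step: the paper explicitly warns that \emph{keeping track of completeness along the gauge trick is delicate}, since the symmetrization needed to enforce $S_3$-invariance can spoil the transverse part of the metric on which the induced $\eta_0$ depends. For general proper groupoids this coordination is subtle, but in the action groupoid setting the genuine $K$-invariance of $\eta^M$ on the whole of $M$ makes it feasible to carry out the averaging without disturbing the transverse component, so that the completeness of $\eta^M$ is inherited by $\eta_0$.
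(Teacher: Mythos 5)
Your overall strategy --- reduce to Theorem \ref{thm:sufficient} by producing a 2-metric whose induced 0-metric is complete, starting from a complete $K$-invariant metric $\eta^M$ on $M$ --- is the right one and matches the paper. But the middle step, where the 2-metric is actually built, has a genuine gap, and it sits exactly at the point you yourself identify as ``the main obstacle.'' Concretely: (1) pulling back a metric on $G$ along $\pi_2:G_2\to G$ does not produce ``a preliminary metric on $G_2$'' --- the pullback of a metric along a submersion with positive-dimensional fibers is only positive semi-definite, vanishing on $\ker d\pi_2$; (2) symmetrizing over the $S_3$-action does not in general preserve the property of being fibered for $m$ (an average of metrics, even a cotangent average, is fibered for $m$ only if each term already is, and a $\pi_2$-fibered tensor permuted by $S_3$ becomes $m$- and $\pi_1$-fibered tensors, whose average you would still need to show is $m$-fibered); and (3) the key claim that ``the construction can be arranged so that the induced 0-metric on $M$ agrees with $\eta^M$'' is asserted, not proved --- this is precisely the issue the paper flags as delicate and leaves open for general proper groupoids, so invoking ``$K$-invariance makes it feasible'' is not an argument.

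The paper sidesteps the averaging entirely. It equips the product groupoid $(K\times K\toto K)\times(M\toto M)$ with the explicit 2-metric $(\eta^K\times\eta^K\times\eta^K,\eta^M)$, where $\eta^K$ is \emph{right}-invariant on $K$ (hence complete, and a 2-metric on the pair groupoid since the multiplication there is a projection and the triple product is visibly $S_3$-invariant). This metric is fibered for the canonical groupoid fibration onto $K\times M\toto M$, $(k_2,k_1,x)\mapsto(k_2k_1^{-1},k_1x)$, which is the quotient by a free, proper, isometric $K$-action; the pushforward is therefore a 2-metric, and its completeness follows from Lemma \ref{lemma:quotient} because Riemannian submersions push completeness downward. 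If you want to salvage your write-up, replace your symmetrization step by this fibration argument: it is the form of the ``gauge trick'' that actually lets you keep track of completeness.
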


\begin{proof}
Let $\eta^K$ be a right invariant metric on $K$, which is of course complete, as the right translations of geodesics are again geodesics, and regard $\eta^K\times\eta^K\times\eta^K$ as a 2-metric on the pair groupoid $K\times K\toto K$.
Let $\eta^M$ be a complete $K$-invariant metric on $M$, see e.g. \cite[Lemma.4.3.6]{dm}, and regard it as a 2-metric on the unit groupoid $M\toto M$. 
Then $(\eta^K\times\eta^K\times\eta^K,\eta^M)$ is a complete 2-metric on the product and it is fibered for the canonical groupoid fibration
$$(K\times K\toto K)\times(M\toto M) \to (K\times M\toto M)$$
given on objects, arrows and pairs of composable arrows by the following formulas:
$$(k,x)\mapsto kx \qquad  (k_2,k_1,x)\mapsto (k_2k_1^{-1},k_1x) \qquad
(k_3,k_2,k_1,x)\mapsto (k_3k_2^{-1},k_2k_1^{-1},k_1x)$$
The pushforward 2-metric is complete by Lemma \ref{lemma:quotient} and the result follows from Thm. \ref{thm:sufficient}. 
\end{proof}

% singular foliations

Our Thm. \ref{thm:sufficient} is a groupoid version of the classic result\cite[Thm.1]{h}, asserting that a complete Riemannian submersion is locally trivial, whose converse was later shown in \cite[Thm.5]{dh2}. Our result should also be compared with \cite[Thm.1]{mr}, where a complete singular Riemannian foliation $(M,F,\eta)$ is shown to be isomorphic to a linear model over a tube around a leaf -- the complete hypothesis is missing in their statement but used along the proof. When the foliation is induced by a complete Riemannian groupoid then the invariant linearization gives a similar result. The problem of comparing both models is left to be explored elsewhere.

%%%%%%%%%%%%%%%%%%%%%%%%%%%%%%%%%%%%%
%%%%%%%%%%%%%%%%%%%%%%%%%%%%%%%%%%%%%

\section{Cooking up complete invariant metrics}

% overview

We review here the Morita invariance of 2-metrics and the result metrics on stacks \cite{dhf2}. Then we prove our second main result, which shows the existence of complete 0-metrics on proper invariantly linearizable groupoids. We finally show that in the regular case this 0-metric can be extended to a 2-metric, and pose the question for the general case. 

\medskip

%  references

Given a Lie groupoid $G\toto M$, we denote by $[M/G]$ it Morita equivalence, or equivalently its orbit {\bf differentiable stacks} (see e.g. \cite{dh1,dhf2}). Two 2-metrics $\eta_2,\eta'_2$ on $G\toto M$ are {\bf equivalent} if for every $x\in M$ they induce the same inner product on $\nu(M,O)_x$. The class $[\eta_2]$ is a Morita invariant by \cite[Thm.6.3.3]{dhf2}, hence it defines a {\bf stacky metric}. Stacky geodesics were then introduced and studied in \cite{dhdm}. Next we provide a quick review of the concepts that we need, and refer there for further details and examples.

% stacky geodesics

A {\bf stacky curve} $\alpha:I\to[M/G]$ is described by a sequence of curves of objects $a_i:I_i\to M$, where $I_i\subset I\subset\R$ are connected opens, together with curves of arrows $a_{i+1,i}:I_{i+1}\cap I_i\to G$ linking them by $ta_{i+1,i}=a_{i+1}$ and $sa_{i+1,i}=a_i$. 
Two collections $(a_i,a_{i+1,i})$ define the same stacky curve if they induce isomorphic maps $(\coprod_i I_{i+1,i}\toto \coprod_i I_i)\to (G\toto M)$ over a common refinement.
% Further details and examples can be found in \cite{dhdm}. 
If $\eta$ is a 2-metric on $G\toto M$, a {\bf stacky geodesic} $\alpha$ is a stacky curve which can be represented by geodesics $(a_i,a_{i+1,i})$ normal to the orbits. 
If every stacky geodesic can be extended to every time we say that $[\eta_2]$ is a {\bf complete} stacky metric on $[M/G]$. 

% proposition

\begin{proposition}\label{prop:complete-various}
Let $G\toto M$ be a proper groupoid. 
\begin{enumerate}[a)]
\item If a 2-metric $\eta_2$ induces a complete 0-metric $\eta_0$ then $[\eta_2]$ is complete;
\item There may not exists a complete 0-metric $\eta_0$;
\item There always exists a 2-metric $\eta_2$ such that $[\eta_2]$ is complete.
\end{enumerate}
\end{proposition}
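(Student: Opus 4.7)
For (a), I want to reduce the existence of stacky geodesic extensions to classical geodesic completeness on $(M,\eta_0)$. A stacky geodesic $\alpha$ is represented locally by a normal geodesic $a_i$ of $\eta_0$, and completeness extends $a_i$ to a normal geodesic $\tilde a_i:\R\to M$ defined for all time. I would then take this global $\tilde a_i$ as a single-chart representative of a stacky extension of $\alpha$, and conclude that $[\eta_2]$ is complete.

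For (b), I would revisit Example \ref{ex:counter} and argue that no $0$-metric there can be complete. In the groupoid $G=(E\times_\R E)\times SU(2)$ over $M=E$, the orbits are the fibres $E_t$ of $\pi$, and the normal representation is trivial (the $SU(2)$ factor plays no role on $\nu_x(M,O_x)\cong T_{\pi(x)}\R$). Consequently, a $0$-metric on $E$ is precisely a Riemannian metric making $\pi:E\to\R$ a Riemannian submersion, and completeness of such a metric would force $\pi$ to be a locally trivial fibre bundle by Hermann's theorem \cite[Thm.1]{h} (equivalently, by \cite[Thm.5]{dh2} as invoked in Lemma \ref{lemma:bundles}). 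This contradicts the non-local-triviality of $\pi$, whose special fibre $E_0\cong V$ is not diffeomorphic to the generic fibre $\R^4$.

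For (c), my plan is to conformally modify an arbitrary $2$-metric by a $G$-invariant proper function. I would start with any $2$-metric $\eta_2^0$ (furnished by the gauge trick of \cite{dhf}), pick a proper smooth function on $M/G$ (which exists since the proper orbit space is Hausdorff and paracompact) and lift it to a smooth $G$-invariant proper $\phi:M\to[0,\infty)$. Since $\phi$ takes the same value on all three vertices of a composable triangle, the function $\phi_2:=\phi\circ s\circ\pi_1:G_2\to\R$ is $S_3$-invariant and factors as $\phi_2=(\phi\circ s)\circ m$, so adding $d\phi_2\otimes d\phi_2$ to $\eta_2^0$ preserves all of the $2$-metric axioms and yields a new $2$-metric $\eta_2$ whose induced $0$-metric is $\eta_0=\eta_0^0+d\phi\otimes d\phi$. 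A standard computation shows that the $\eta_0$-dual norm of $d\phi$ is everywhere strictly less than $1$, so along any unit-speed stacky geodesic $\phi$ grows at rate less than one and is preserved across arrow transitions; a finite-time stacky geodesic is thereby confined to a set of the form $\phi^{-1}([0,R])$, whose image in $M/G$ is compact, and normal extensions at its endpoints are supplied by linearization (Thm. \ref{thm:riem-grpd-lin}). Iterating gives a global extension.

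The main obstacle will be to make the gluing in (c) watertight: when the local normal-geodesic representative is extended past its domain, one must simultaneously extend the arrow transitions $a_{i+1,i}$ so that the assembled pieces genuinely define a stacky geodesic, which requires invoking linearization uniformly over the compact image $\phi^{-1}([0,R])/G\subset M/G$. Constructing the smooth $G$-invariant proper $\phi$ itself is also somewhat subtle: one has to combine $G$-invariant partitions of unity on $M$ with the paracompactness of $M/G$, since $M/G$ is not a smooth manifold in general.
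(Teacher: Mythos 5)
Parts a) and b) of your plan are essentially sound. For a) you argue exactly as the paper does: extend the local normal-geodesic representative by completeness of $\eta_0$ and use it as a global chart. For b) you use a heavier counterexample than necessary (the paper takes the submersion groupoid of $\pi_1:\R^2\setminus\{0\}\to\R$ and concludes via Thm.~\ref{thm:sufficient} and Lemma~\ref{lemma:necessary}, whereas you take Ex.~\ref{ex:counter} and conclude via Hermann's theorem), but your identification of $0$-metrics with fibered metrics for these submersion groupoids is correct and both routes are valid.

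Part c) is where there is a genuine gap; the paper simply cites \cite[Cor.21]{dhdm}, whose construction (a conformal rescaling $f\eta$) is different from yours, so your argument must stand on its own and it does not. First, a small but real error: a smooth proper function on $M/G$ does \emph{not} lift to a proper $G$-invariant function on $M$ unless $M\to M/G$ is proper, i.e.\ essentially the source-proper case --- the preimage of a compact set in $M/G$ is a union of orbits, which by Lemma~\ref{lemma:orbit-type} are non-compact precisely in the case of interest. (Your subsequent argument only uses properness of the descended function, so this is reparable, but as written the claim is false.) Second, and decisively, the confinement you obtain --- that a finite-time unit-speed stacky geodesic stays in $\phi^{-1}([0,R])$, a set whose image in $M/G$ is compact but which is itself non-compact --- does not yield an extension. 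The failure mode of stacky completeness is exactly that the local normal-geodesic representatives can escape to the ends of $M$, or run off the linearization neighborhood $U$ of Thm.~\ref{thm:riem-grpd-lin} (which is \emph{not} a uniform tube $B'(O,r)$ for a general proper groupoid), while remaining over a compact region of $M/G$; your modification $\eta_2^0+d\phi_2\otimes d\phi_2$ with $\phi$ basic controls only the transverse "distance travelled in $M/G$" and does nothing to prevent this. Making the step "normal extensions at the endpoints are supplied by linearization, iterating gives a global extension" precise requires a uniform lower bound on the extension time over the compact image, which is the actual content of the stacky Hopf--Rinow machinery and of the specific conformal factor "measuring the distance to $\infty$" used in \cite[Cor.21]{dhdm}. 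You correctly identify this as the main obstacle, but the proposal does not overcome it.
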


\begin{proof}
A stacky geodesic $\alpha:I\to[M/G]$ is locally represented by a geodesic $a:I_i\to M$ normal to the orbits, this extends to the whole line $\bar a:\R\to M$, giving a stacky global extension $[\bar a]$ of $\alpha$. This proves a).

Regarding b), let $G\toto M$ be the submersion groupoid arising from the first projection $\pi_1:\R^2\setminus\{0\}\to\R$. The existence of a complete 0-metric $\eta_0$ would imply that $\eta_0\times\eta_0\times\eta_0$ is a 2-metric extending it, that $G\toto M$ is invariantly linearizable by Thm.  \ref{thm:sufficient}, and that $\pi$ is locally trivial by Lemma \ref{lemma:necessary}, which is clearly not the case.

Finally, c) appears as \cite[Cor.21]{dhdm}, where a product $f\eta$ of an arbitrary metric $\eta$ and  a conformal factor measuring the distance to $\infty$ is considered. 
\end{proof}

% proposition

Note that a) is a stacky version of Lemma \ref{lemma:quotient} and \cite[Thm.1]{h} applied to $M\to[M/G]$.
The counter-example in b) is already presented in \cite[Rmk.5]{dhdm} and shows that the completeness in \cite[Prop.3.14]{ppt} is not always possible to obtain. We will now correct this result by adding up the key hypothesis of invariantly linearizable, hence acquiring a partial converse for Thm. \ref{thm:sufficient}. In light of Lemma \ref{lemma:orbit-type}, we can split the problem in the source-proper case and in the case where the orbits are non-compact. The first case easily follows from the following result.

\begin{proposition}\label{prop:cooking-sproper}
Let $G\toto M$ be a $s$-proper groupoid. A 2-metric $\eta_2$ is complete if and only if the stacky metric $[\eta_2]$ on $[M/G]$ is complete.
\end{proposition}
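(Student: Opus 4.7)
The plan is to prove both directions by combining completeness-preserving properties of Riemannian submersions with a Hopf--Rinow style reduction to the orbit space. The forward direction is routine and does not in fact require $s$-properness: by Lemma~\ref{lemma:quotient} applied successively to the Riemannian submersions $m:G_2\to G$ and $s:G\to M$, completeness of $\eta_2$ descends first to $\eta_1$ and then to $\eta_0$, and Proposition~\ref{prop:complete-various}(a) then delivers completeness of $[\eta_2]$.

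For the converse, my plan is to establish the chain
\[
[\eta_2]\text{ complete}\ \Longrightarrow\ \eta_0\text{ complete}\ \Longrightarrow\ \eta_1\text{ complete}\ \Longrightarrow\ \eta_2\text{ complete}.
\]
The last two implications rely on the following partial converse to Lemma~\ref{lemma:quotient}: a \emph{proper} Riemannian submersion $\pi:E\to B$ reflects completeness, because a finite-time geodesic $\gamma$ in $E$ projects to a bounded-length curve in $B$, which lies inside a compact metric ball by Hopf--Rinow in the complete base, and properness then confines $\gamma$ itself to a compact preimage, so $\gamma$ extends. Under $s$-properness, $s:G\to M$ is proper by hypothesis, and $m:G_2\to G$ is proper since its fibers are copies of the compact $t$-fibers (recall that $s$-proper implies $t$-proper via inversion); both are Riemannian submersions from the 2-metric structure on $G_2$.

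The heart of the argument is the first implication $[\eta_2]\text{ complete}\Rightarrow\eta_0\text{ complete}$. My plan is to take a Cauchy sequence $\{x_n\}$ in $(M,\eta_0)$ and produce a convergent subsequence. Its projection $\{[x_n]\}$ to the orbit space $[M/G]$ is Cauchy in the induced stacky distance, since $M\to[M/G]$ is $1$-Lipschitz. A stacky Hopf--Rinow-type argument should then upgrade the assumed stacky geodesic completeness of $[\eta_2]$ to Cauchy completeness of $[M/G]$, yielding a limit $[x_\infty]\in[M/G]$. The orbit $O_\infty=O_{x_\infty}$ is compact by $s$-properness, so a small open tube $B(O_\infty,\epsilon)$ around it is pre-compact in $M$ by the ordinary tubular neighbourhood theorem applied to the compact embedded submanifold $O_\infty$. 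Eventually $\{x_n\}$ lies in such a tube, hence admits a convergent subsequence; combined with the Cauchy property this gives $x_n\to x_\infty\in M$.

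The hard part will be the stacky Hopf--Rinow upgrade, namely that stacky geodesic completeness implies Cauchy completeness of $[M/G]$ as a metric space. This is exactly where $s$-properness must enter essentially, since Proposition~\ref{prop:complete-various}(b) shows the analogous equivalence breaks down for general proper groupoids. I expect to tackle this by lifting a Cauchy sequence $\{[y_n]\}$ in $[M/G]$ to a sequence of representatives in $M$ via minimizing pairs across consecutive orbits (using compactness of each orbit to realize the stacky distance), concatenating the resulting short segments into a curve in $M$ of bounded total length, and then applying \cite[Prop.~10]{dhdm} on the completeness of normal geodesics to extract the limit orbit.
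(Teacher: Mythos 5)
Your converse follows essentially the same route as the paper's: project along $M\to[M/G]$, invoke a stacky Hopf--Rinow statement to produce a limit orbit, and use $s$-properness to trap the tail of the sequence in a compact tube around that (compact) orbit. Two remarks. First, the step you single out as ``the hard part'' --- that completeness of $[\eta_2]$ gives Cauchy completeness of $(M/G,d)$ --- is not something to prove here: it is precisely the stacky Hopf--Rinow theorem of \cite[Thm.19]{dhdm}, which holds for arbitrary proper groupoids and which the paper simply cites (it is used again later for groupoids that are not $s$-proper). Accordingly, your diagnosis of where $s$-properness enters essentially is slightly off: it is not needed for the Hopf--Rinow upgrade, but exactly for the final step, namely the compactness of the limit orbit and hence of the closed tube around it; this is what fails in the counterexample of Prop.~\ref{prop:complete-various}(b), where the orbit space is already complete. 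Second, your explicit chain $\eta_0\Rightarrow\eta_1\Rightarrow\eta_2$ via the fact that a proper Riemannian submersion reflects completeness is a genuine addition, since the paper's proof stops at the completeness of $\eta_0$ and leaves this lift implicit; just note that ``compact fibers'' alone do not give properness of $m$ --- argue instead that $m:G_2\to G$ is isomorphic over $G$ to the pullback of $s$ along $s$ (via $(g_2,g_1)\mapsto(g_2g_1,g_1)$), hence proper whenever $s$ is.
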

\begin{proof}
If $\eta_2$ is complete then we have just proved that $[\eta_2]$ is also complete in Prop. \ref{prop:complete-various}. Suppose now that $[\eta]$ is complete. Let $a: (p,q)\to M$ be a maximal geodesic and suppose that $q<\infty$. 
Given $q_n\nearrow q$, by \cite[Thm.3]{dhdm} we get a Cauchy sequence $([a(q-\frac{1}{n})])$ in $M/G$. By the stacky Hopf-Rinow Theorem \cite[Thm.19]{dhdm} there is $x\in M$ such that $[a(q-\frac{1}{n})]\to[x]$. It follows that $a(t)$ sits inside some compact tube $\overline{B(O_x,\epsilon)}$ for $t$ close to $q$, hence $a$ is extendable and we reach a contradiction. The proof $p=-\infty$ is analogous.
\end{proof}

% main problem

Given $G\toto M$ an invariantly linearizable proper groupoid with non-compact orbits, our strategy to cook up an invariant complete metric on it will be the following: (i) set a complete stacky metric on $[M/G]$, (ii) lift it locally to invariantly linearizable opens via the stacky submersion $U\to [U/G_{U}]$, and (ii) patch the local pieces together in way inspired by \cite[Thm.5]{dh2}. Step (i) was recalled in Prop. \ref{prop:complete-various}. Let us address now the Step (ii).

\begin{lemma}\label{lem:cooking-localmodel}
Let $\eta_2$ be a 2-metric on the 
the infinitesimal tube $B'(G_0,2r)\toto B'(O,2r)$, viewed as a subgroupoid of the linear model $\nu(G,G_O)\toto\nu(M,O)$. Then there is a new 2-metric $\eta'_2$ on $B'(G_0,2r)\toto B'(O,2r)$ such that $\eta'_0$ is complete and such that $\eta_2\r{B'(G_0,r)}\sim\eta'_2\r{B'(G_0,r)}$. 
\end{lemma}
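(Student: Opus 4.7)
The plan is to produce $\eta'_2$ as a conformal rescaling $\eta'_2 = \Phi^2 \eta_2$, where $\Phi$ is chosen to equal $1$ on $B'(G_O, r)$ and to diverge as one approaches the boundary $\{N = 2r\}$ of the tube. The first property gives $\eta'_2 = \eta_2$ on $B'(G_O, r)$, so the required equivalence is tautological, and the blow-up will force completeness. The decisive structural input is that the tube $B'(G_O, 2r) \toto B'(O, 2r)$ is source-proper: its source-fibers are diffeomorphic to the isotropy group $G_{x_0}$, which is compact since $G$ is proper. This lets me invoke Prop.~\ref{prop:cooking-sproper} and reduce completeness of $\eta'_2$ to completeness of the induced stacky metric on $[B'(O,2r)/B'(G_O,2r)]$.

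Concretely, define the radial function $N : B'(O, 2r) \to [0, 2r)$ by $N(x,v) = \lVert v \rVert_{\eta_0}$. Since $\eta_0$ is a 0-metric, the normal representation is isometric, so $N \circ s = N \circ t$ on arrows of the linear model; hence $N$ pulls back to a function on $\nu(G, G_O)$ and further to an $S_3$-invariant function on $G_2$ that is constant on the fibers of $m$, $\pi_1$, $\pi_2$. Pick a smooth non-decreasing $f : [0, 2r) \to [1, \infty)$ with $f \equiv 1$ on $[0, r]$ and $\int_{r}^{2r} f(s)\, ds = \infty$, and set $\Phi = f \circ N$. One then checks that $\eta'_2 := \Phi^2 \eta_2$ is again a 2-metric: $S_3$-invariance is preserved because $\Phi$ is $S_3$-invariant; the maps $m,\pi_1,\pi_2 : G_2 \to G$ and $s,t: G \to M$ remain Riemannian submersions because $\Phi$ is constant on their fibers, inducing $\eta'_1 = \Phi^2 \eta_1$ and $\eta'_0 = \Phi^2 \eta_0$; the new $\eta'_0$ is still a 0-metric since $\Phi$ is $G_O$-invariant on $B'(O,2r)$; and the unit section remains totally geodesic since $\Phi$ is pulled back from the base.

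Finally, to show $\eta'_0$ is complete I would argue completeness of the stacky metric. The orbit stack $[B'(O,2r)/B'(G_O,2r)]$ identifies, via Morita equivalence with the isotropy, with the quotient $[B(0,2r)/G_{x_0}]$ of a ball in a representation of the compact group $G_{x_0}$; its underlying coarse space is parametrised by the radial coordinate $t = N$, and the induced stacky length element in the radial direction is $f(t)\, dt$. Since $\int_0^{2r} f(s)\, ds = \infty$, no stacky geodesic can approach the boundary of the tube in finite time, so $[\eta'_2]$ is complete. Prop.~\ref{prop:cooking-sproper} then yields completeness of $\eta'_2$, and Lemma~\ref{lemma:quotient} transports this down via the Riemannian submersions $m : G_2 \to G$ and $s : G \to M$ to $\eta'_0$. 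The main obstacle I anticipate is the careful verification that conformal rescaling preserves every axiom of a 2-metric — the $S_3$-invariance under the permutation of the three vertices of a triangle and the total geodesicity of the unit section in particular — together with the identification of stacky normal geodesics with radial curves, which requires some care at the singular strata of $[B(0,2r)/G_{x_0}]$.
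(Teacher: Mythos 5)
There is a genuine gap, and it sits exactly where your argument leans hardest. You claim that the tube groupoid $B'(G_O,2r)\toto B'(O,2r)$ is source-proper because its source-fibres are diffeomorphic to the isotropy group $G_{x_0}$. This is false: by descriptions B) and C) of Prop.~\ref{prop:local-model}, the source-fibre of the linear model over a point $(x,v)$ is $P_x=s^{-1}(x)=G(-,x)$, a principal $G_x$-bundle over the \emph{whole orbit} $O$, hence compact only when $O$ is compact (cf.\ Lemma~\ref{lemma:orbit-type}). In the proof of Thm.~\ref{thm:main2} this lemma is invoked precisely on the components where no orbit is compact (the source-proper components are dispatched separately by Prop.~\ref{prop:cooking-sproper}), so in the case of interest the tube is \emph{not} source-proper and Prop.~\ref{prop:cooking-sproper} cannot be applied. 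Without source-properness, completeness of the stacky metric does not imply completeness of $\eta'_0$.

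This is not merely a citation problem; the construction itself is insufficient. Your conformal factor $\Phi=f\circ N$ depends only on the radial coordinate, so on any region $\{N\leq 2r-\epsilon\}$ the metric $\eta'_0=\Phi^2\eta_0$ is uniformly equivalent to $\eta_0$. A unit-speed $\eta'_0$-geodesic that stays in such a region but escapes along the orbit directions (for instance if $\eta_0$ restricted to the non-compact orbit $O$ is already incomplete, which nothing forbids) still has finite length, so $\eta'_0$ need not be complete. The paper's proof deals with exactly this: it passes to the Morita bibundle $(P\times P\toto P)\times(K\times N\toto N)$ with $K=G_x$ compact and $N=B(0,2r)$ the slice, applies your radial rescaling idea only on the $K\times N$ factor (where it is legitimate, since the acting group there really is compact), and crucially inserts a complete $K$-invariant metric $\eta''_0$ on $P$ before quotienting back down; it is this extra factor that prevents escape to infinity along the orbit directions. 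Your rescaling survives as the treatment of the radial direction, but without something playing the role of the complete metric on $P$ the conclusion does not follow. (A smaller point: the norm $N$ should be taken with respect to the bundle metric on $\nu(M,O)$ coming from $\eta_0$ along the zero section, and its invariance follows from Prop.~\ref{prop:tube}; as written, $\lVert v\rVert_{\eta_0}$ for $v$ a point of the total space is not well defined.)
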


\begin{proof}
Pick $x\in O$, write $K=G_x$ and $N=B(0,2r)\subset\nu_x(G,G_O)$. Then the infinitesimal tube is Morita equivalent to the action groupoid $K\times N\toto N$ via the following two maps,
$$(B'(G_0,2r)\toto B'(O,2r))\from (P\times P\toto P)\times (K\times N\toto N)\to (K\times N\toto N)$$
where the second is just the projection, and the first is the quotient by the subgroupoid $K\toto\ast$.
Let $\eta'$ be a 2-metric on $K\times N\toto N$ corresponding to $\eta$ by this Morita equivalence \cite[Thm.6.3.3]{dhf2}.
Let $f:N\to\R$ be a positive $K$-invariant function such that $f\r{B'(0,r)}\equiv 1$ and $f\eta'_0$ is complete. The composition $f_2=f\pi:K\times K\times N\to\R$ satisfy that $f_2\eta'_2$ is a 2-metric on $K\times N\toto N$ and that 
$(f\eta')\r{B'(0,r)}$ equals $\eta\r{B'(0,r)}$.
Let $\eta''_0$ be a complete $K$-invariant metric on $P$ \cite[Lemma.4.3.6]{dm}. Then the product $\eta''_0\times\eta''_0\times\eta''_0\times f_2\eta'_2$ is a complete 2-metric on the middle groupoid that is $K$-invariant, and its quotient via the first map is the desired 2-metric.
\end{proof}

% main theorem

We are finally in conditions to prove our second main theorem, 

\begin{theorem}\label{thm:main2}
Let $G\toto M$ be a proper groupoid that is invariantly linearizable around its orbits. Then it admits a complete 0-metric $\eta_0$.
\end{theorem}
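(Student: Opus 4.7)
The plan is to proceed by the dichotomy of Lemma \ref{lemma:orbit-type}, after reducing to connected orbit space by treating components of $M/G$ separately (each pulls back to a $G$-invariant open in $M$). I may then assume that either (A) $G\toto M$ is source-proper, or (B) no orbit of $G$ is compact.

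For case (A), I would start from a 2-metric $\eta_2$ whose stacky class $[\eta_2]$ is complete, as supplied by Prop.\ \ref{prop:complete-various}(c), and apply Prop.\ \ref{prop:cooking-sproper} to promote stacky completeness of $[\eta_2]$ to completeness of $\eta_2$ itself. The induced 0-metric $\eta_0$ is then complete because $\pi_1\colon G_2\to G$ and $t\colon G\to M$ are surjective Riemannian submersions, so Lemma \ref{lemma:quotient} propagates completeness down the chain $(G_2,\eta_2)\to(G,\eta_1)\to(M,\eta_0)$.

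For case (B), I would execute the three-step strategy outlined in the paragraph preceding Lemma \ref{lem:cooking-localmodel}. First, fix a complete stacky metric $[\eta_2^\bullet]$ on $[M/G]$, again by Prop.\ \ref{prop:complete-various}(c). Second, cover $M$ by an invariant locally finite family $\{V_\alpha\}$ of invariantly linearizable tubes around orbits $O_\alpha$; on each $V_\alpha$ apply Lemma \ref{lem:cooking-localmodel} (to the restriction of a representative of $[\eta_2^\bullet]$) to obtain a 2-metric $\eta^\alpha_2$ whose 0-metric $\eta^\alpha_0$ is complete on $V_\alpha$ and whose stacky class agrees with $[\eta_2^\bullet]\r{V_\alpha}$ on a smaller invariant sub-tube. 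Third, choose a $G$-invariant partition of unity $\{\rho_\alpha\}$ subordinate to the sub-tubes (obtained by averaging an ordinary partition of unity over the proper groupoid), lift each $\rho_\alpha$ to a function $\tilde\rho_\alpha$ on $G_2$ via a vertex map $s\circ\pi_1$ ($G$-invariance of $\rho_\alpha$ makes $\tilde\rho_\alpha$ both $S_3$-invariant and constant along the fibres of $m$), and set $\eta_2=\sum_\alpha\tilde\rho_\alpha\,\eta^\alpha_2$; this is a global 2-metric, and $\eta_0$ will be its induced 0-metric.

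The hard part will be verifying completeness of this $\eta_0$, adapting the patching argument of \cite[Thm.5]{dh2}. By construction and Morita invariance, the induced stacky class $[\eta_2]$ coincides pointwise with the complete class $[\eta_2^\bullet]$. Given a maximal unit-speed $\eta_0$-geodesic $a\colon[0,\tau)\to M$ with $\tau<\infty$, I would project $a$ to a stacky curve in $([M/G],[\eta_2^\bullet])$ whose length is bounded by $\tau$, because $\eta_0$ dominates the normal component in the common stacky class; the stacky Hopf--Rinow theorem \cite[Thm.19]{dhdm} then yields a stacky limit $[x_\infty]\in[M/G]$. Passing to a subsequence $t_n\nearrow\tau$, the tail of $a$ enters a single tube $V_\alpha$ around $O_{x_\infty}$; on a closed bounded neighborhood of $O_{x_\infty}$ inside $V_\alpha$ only finitely many $\rho_\beta$ contribute, so $\eta_0$ and $\eta^\alpha_0$ are uniformly equivalent there, the sequence $a(t_n)$ is Cauchy in $\eta^\alpha_0$, and completeness of $\eta^\alpha_0$ produces a convergent subsequence, contradicting the maximality of $\tau$. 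The main obstacle is precisely this final comparison, and the point of arranging matching stacky classes via Lemma \ref{lem:cooking-localmodel} is exactly to make the transfer from $\eta_0$-Cauchy to $\eta^\alpha_0$-Cauchy legitimate.
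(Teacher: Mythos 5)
Your overall skeleton matches the paper's: split by Lemma \ref{lemma:orbit-type}, dispose of the source-proper case via Prop.\ \ref{prop:cooking-sproper}, and in the non-compact case start from a complete stacky metric, modify it locally with Lemma \ref{lem:cooking-localmodel}, and patch. But the patching step, which is the heart of the proof, has two genuine gaps.

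First, the glued object. A sum $\sum_\alpha\tilde\rho_\alpha\,\eta^\alpha_2$ of fibered metrics weighted by a partition of unity is not fibered for $m$, so it is not a 2-metric; convexity fails for the fibered condition, which is why the paper glues only the \emph{0-metrics}, and does so with the cotangent average $\bigl(\sum_i\varphi_i(\eta_0^{x_i})^*\bigr)^*$ --- restriction to the conormal bundle commutes with convex combinations of cotangent metrics, so the normal inner products (hence the 0-metric condition) are preserved. Note also that if your gluing of 2-metrics worked as stated, you would have proved the full converse of Thm.\ \ref{thm:sufficient}, which the paper explicitly leaves open after Thm.\ \ref{thm:main2}; that should be a warning sign.

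Second, and more seriously, the completeness argument. You transfer Cauchyness from $\eta_0$ to $\eta^\alpha_0$ by claiming that on a ``closed bounded neighborhood of $O_{x_\infty}$ inside $V_\alpha$'' only finitely many $\rho_\beta$ contribute and the two metrics are uniformly equivalent. In case (B) the orbit $O_{x_\infty}$ is non-compact, so such a neighborhood is non-compact: local finiteness of the cover gives finitely many contributing functions only on compact sets, and two metrics that are pointwise comparable on a non-compact set need not be uniformly comparable --- the ratio can degenerate as one runs off to infinity along the orbit, which is exactly how a finite-time escape could occur. The paper's $f$-tube construction is designed precisely to defeat this: using a proper function $f$ it builds, inside each chart $W_{x_i}$, infinitely many pairwise disjoint compact shells $T_i(n(i,j))$ of $\eta^{x_i}_0$-width greater than $1$, arranges the partition of unity so that the glued metric \emph{equals} $\eta^{x_i}_0$ on each such shell, and concludes that an inextendible geodesic trapped near $W_{x_i}$ must cross infinitely many of them, each costing time at least $1$. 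Without some such quantitative device your final comparison does not go through, and this is the step you would need to supply.
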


\begin{proof}
By Prop. \ref{prop:complete-various} we can consider a 2-metric $\eta_2$ on $G\toto M$ such that $[\eta_2]$ is complete. 
The problem now consists of showing that $[\eta_2]$ can be lifted to a complete metric on $M$ along the stacky submersion $M\to[M/G]$.
We can work on each connected component of $M/G$ independently. 
It follows from Lemma \ref{lemma:orbit-type} that we can either assume that it is source-proper or that none of its orbits are compact. In the first case, it follows from Prop. \ref{prop:cooking-sproper} that $\eta_2$ is already complete and we are done. In the second case, we will show how to replace $\eta_2$ by an equivalent metric $\eta_2'$ such that $\eta'_0$ is complete.

For each $x$ in $M$, since $G\toto M$ is invariantly linearizable around $O_x$, there exist $r>0$ and $O_x\subset V_x\subset M$ such that 
$\exp:(B(G_{O_x},2r_x)\toto B(O_x,2r_x))\cong (G\r{V_x}\toto V_x)$ is an isomorphism. 
Write $W_x=\exp(B(O_x,r_x))$.
Using Lemma \ref{lem:cooking-localmodel} we can build a new metric $\eta^x_2$ on $G\r{V_x}\toto V_x$ such that $\eta^x_2\r{W_x}\sim \eta_2\r{W_x}$ and that $\eta^x_0$ is complete. The next step will be to merge the several $\eta_2^x$ using a smart partition of 1 emulating what is done in \cite[Thm.5]{dh2}.

%%%%%%%% building the tubes
Extract a countable covering $\{W_{x_i}\}_{i\in\N}$ from $\{W_x\}_{x\in M}$, and fix $f:M\to [0,+ \infty)$ a smooth proper function. Note that 
$\overline W_{x_i}\cap\{f>n\}\supset O_{x_i}\cap\{f>n\} \neq\emptyset$ for all $n$ because we are assuming that the orbits are non-compact.
For each pair $i,n$ such that $\overline W_{x_i}\cap\{f\leq n\}\neq\emptyset$ the set $\overline{B(\overline W_{x_i}\cap\{f\leq n\},1)}$ is compact
 and therefore we can pick $l(i,n)>0$ satisfying $d(\overline W_{x_i}\cap\{f\leq n\}, \overline W_{x_i}\cap\{f>n+l(i,n)\})>1$.
We define an {\bf $f$-tube} within $V_{x_i}$ with inner radius $n$ to be a set of the form $T_i(n)=\overline W_{x_i}\cap \{n\leq f\leq n+l(i,n)\}$. Using these compact $f$-tubes we will merge the 0-metrics $\eta^{x_i}_0$ into a complete 0-metric $\eta'_0$ equivalent to $\eta_0$.

% merging

We first construct a sequence of $f$-tubes $\{T_i(n(i,j))\}_{i\leq j}$ with inner radius defined inductively as follows. We start by setting $n(1,1)$ so that $T_1(n(1,1))\neq\emptyset$. After choosing $n(i,j)$, if $i<j$, we set $n(i+1,j)$ so that the new tube $T_{i+1}(n(i+1,j))$ is not empty and does not meet any of the previous tubes, which is possible because $f$ has a maximum over the union of them. If $i=j$ we proceed similarly, setting $n(1,j+1)$ so that $T_{1}(n(1,j+1))$ is not empty and does not meet the previous tubes.
This way we end up with a sequence  $\{T_i(n(i,j))\}_{i\leq j}$
such that (i) it contains infinitely many $f$-tubes within $V_{x_i}$, and (ii) the terms of the sequence are pairwise disjoint. 

Let $T_i=\bigcup_{j} T_i(n(i,j))$ be the union of the $f$-tubes in of the sequence within $V_{x_i}$. 
Let $\{\varphi_i\}_{i\in\N}$ be a partition of unity subordinated to $V_{x_i}\setminus \bigcup_{i\neq k} T_{k}$.
Finally, take the contangent average of the metrics $\eta^{x_i}_0$
$$\eta'_0= \left(\sum_i \varphi_i (\eta_0^{x_i})^{*}\right)^{*}.$$ 
Since each $\eta^{x_i}_0$ induce the same inner product on the normal vector spaces $\nu(M,O)_x$ as $\eta_0$, the same holds for $\eta_0$, and therefore $\eta_0$ is a 0-metric. It only remains to show that $\eta'_0$ is complete.

Let $a:(p,q)\to M$ be a maximal unit-speed geodesic, suppose $q\leq\infty$ and take $q_n\nearrow q$.
By the relation between the stacky metric and the distance on $M/G$ established in \cite[Thm.3]{dhdm} we have $d([a(q_n)],[a(q_m)])\leq \m{q_n-q_m}$, so $([a(q_n)])$ is a Cauchy sequence. By the stacky Hopf-Rinow \cite[Thm.19]{dhdm} the space $(M/G,d)$ is complete and we get $x\in M$ such that $[a(q_n)]\to[x]$.
Let $i$ be such that $x\in W_{x_i}$, and hence $a(q-\delta, q)\subset W_{x_i}$ for small $\delta$. 
Since $a(q-\delta,q)$ cannot be extended, it cannot be contained in any compact, and therefore it must go through infinitely many tubes $T_i(n(i,j))$. But over each of these tubes the metric $\eta'_0$ agrees with $\eta^{x_i}_0$, and therefore $a$ needs at least time 1 to go through each of them. This leads to a contradiction proving that $q=\infty$. The proof of $p=-\infty$ is analogous. 
\end{proof}

% case of regular groupoids

It should be noted that Thm. \ref{thm:main2} is not the precise converse of Thm. \ref{thm:sufficient}, for a priori the constructed 0-metric is not induced  by a 2-metric. The problem of extending a 0-metric to a 2-metric is subtle and we refer to \cite{dhf} for several examples. 
We believe that a proper invariantly linearizable groupoid may indeed admit a 2-metric $\eta_2$ with $\eta_0$ complete, but we have not found yet a proof.
When working with (Hausdorff) regular groupoids things get simpler:

\begin{lemma}\label{prop:extension}
If $G\toto M$ is regular then every 0-metric $\eta_0$ extends to a 2-metric.
\end{lemma}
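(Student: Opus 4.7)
The plan is to exploit the constant-rank structure of a regular Lie groupoid to decompose the extension problem into independent pieces that can be handled by averaging and smooth splittings.

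For $G\toto M$ regular, we have two constant-rank short exact sequences of vector bundles over $M$: the tangent-to-the-foliation sequence $0\to T\F\to TM\to \nu\to 0$, and the Lie algebroid sequence $0\to \mathfrak{i}\to A\xto{\rho} T\F\to 0$, where $\mathfrak{i}=\ker\rho$ is the isotropy Lie algebra bundle. The given 0-metric $\eta_0$ already equips $TM$ with an inner product whose induced quotient on $\nu$ is invariant under the normal representation, and whose restriction to $T\F$ captures the longitudinal part.

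First, I would equip $\mathfrak{i}$ with an adjoint-invariant fiberwise inner product using properness: each isotropy group $G_x$ is compact, so any smooth fiberwise inner product on $\mathfrak{i}$ can be averaged against Haar measure to obtain an invariant one. Choosing a smooth splitting $A=\mathfrak{i}\oplus C$ (possible because both summands have constant rank), transporting $\eta_0\r{T\F}$ to $C$ via the isomorphism $\rho\r{C}:C\xto{\cong}T\F$, and combining with the invariant metric on $\mathfrak{i}$, I obtain a metric on $A$. Next, I would upgrade this to a 1-metric $\eta_1$ on $G$ by transporting along right translations (which identify $\ker(ds)\r{g}\cong A_{t(g)}$) and declaring $s:G\to M$ a Riemannian submersion over $\eta_0$ by prescribing the horizontal part via $s$. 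Averaging with the pullback under inversion ensures that $t$ becomes a Riemannian submersion as well and that the unit section is totally geodesic. A parallel construction on $G_2$, using the three projections $\pi_1,\pi_2,m:G_2\to G$ and symmetrizing over the $S_3$-action permuting vertices, produces the desired 2-metric $\eta_2$ whose induced 0-metric is $\eta_0$ by construction.

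The main obstacle I expect is ensuring simultaneous compatibility of the averaging procedure with the fibered conditions over $\pi_1,\pi_2,m$ and with the $S_3$-invariance, while still recovering $\eta_0$ on the nose rather than up to equivalence. Regularity is precisely what makes this feasible: the splitting $A=\mathfrak{i}\oplus C$ and the subbundle $T\F\subset TM$ exist globally and smoothly, so the algebroid metric built from $\eta_0$ and the invariant metric on $\mathfrak{i}$ is well defined globally, and the symmetrization can be carried out fiberwise without interfering with the horizontal geometry inherited from $\eta_0$. In the general singular case the isotropy rank jumps, obstructing such a global splitting — this is the source of the subtlety alluded to by the authors in the paragraph preceding the lemma, and the reason the extension problem remains open in general.
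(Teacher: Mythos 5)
Your overall strategy---use regularity to split the tangent spaces into constant-rank pieces and transport $\eta_0$ onto them---is the same one the paper follows, but there is a genuine gap at the step ``averaging with the pullback under inversion ensures that $t$ becomes a Riemannian submersion as well.'' The fibered condition is not preserved by averaging: if $\eta$ is fibered for $s$ then $i^*\eta$ is fibered for $t$, but $\tfrac12(\eta+i^*\eta)$ is in general fibered for \emph{neither}, since the orthogonal complement of $\ker ds$ for the averaged metric is neither of the two original complements and no well-defined base metric is induced. Even the cotangent average, which is the correct operation in this theory, only preserves ``fibered for $s$'' when \emph{both} summands are already fibered for $s$ with the same base metric. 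The paper therefore builds a metric on $G$ that is fibered for $s$ and $t$ \emph{simultaneously} before any averaging, via the four-term decomposition $TG=I\oplus(\ker ds\cap I^\bot)\oplus(\ker dt\cap I^\bot)\oplus(\ker ds+\ker dt)^\bot$ with $I=\ker ds\cap\ker dt$; the crucial point, which your proposal records but never uses, is that on $(\ker ds+\ker dt)^\bot$ the pullbacks of $\eta_0|_{F^\bot}$ along $ds$ and along $dt$ coincide precisely because $\eta_0$ is a 0-metric, i.e.\ invariant under the normal representation. Only after simultaneous fiberedness is secured does one take the cotangent average with $i^*$ to gain inversion-invariance. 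Your choice of an $s$-horizontal complement ``via $s$,'' with no compatibility imposed relative to $\ker dt$, cannot be repaired by symmetrization; the same objection applies to your final ``symmetrizing over the $S_3$-action'' on $G_2$, where the paper instead repeats the splitting argument or invokes the gauge trick, which is engineered to preserve fiberedness.

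Two smaller points. First, you invoke properness (compactness of the isotropy groups) to produce an adjoint-invariant inner product on the isotropy bundle; the lemma does not assume properness, and no invariance is needed there---the paper puts an \emph{arbitrary} metric on $I$, which lies inside $\ker ds\cap\ker dt$ and is invisible to the fibered conditions. Second, your algebroid splitting into the isotropy bundle plus a complement isomorphic to the orbit foliation via the anchor is, after right translation, essentially the paper's identification of $\ker ds\cap I^\bot$ with the pullback of $\eta_0|_{F}$ along $t$; that part of your construction does match the paper's.
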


\begin{proof}
Let $F\subset TM$ be the foliation by orbits and let $\eta'$ be an auxiliary metric on $G$. Writing $I=\ker ds\cap \ker dt$, we get the following vector bundle orthogonal decomposition:
$$TG= I \oplus (\ker ds \cap I^\bot) \oplus (\ker dt \cap I^\bot) \oplus 
(\ker ds+\ker dt)^\bot$$
The maps $ds,dt:(\ker ds+\ker dt)^\bot\to F^\bot$ are fiberwise isomorphism, and the pullbacks of $\eta\r{F^\bot}$ along these two maps agree, for $\eta$ is invariant. We endow $(\ker ds+\ker dt)^\bot$ with this metric.
We also endow $\ker ds \cap I^\bot$ and $\ker dt\cap I^\bot$ with the pullback metrics of $\eta_0\r{F}$ along $ds$ and $dt$, respectively, equipped $I$ with an arbitrary metric, and declare the four terms to be orthogonal. The resulting metric $\eta'$ on $G$ is fibered with respect to the source and the target, and therefore, the cotangent average $\frac{1}{2}(\eta'^* +i^*\eta'^*)$ is a 1-metric $\eta'_1$ \cite[Prop.2.2]{dhf}.
We can use a similar argument to extend $\eta'_1$ to a 2-metric, or alternatively, we can directly apply the gauge trick to $\eta'_1$, for $\eta'_0$ is in this case preserved, see \cite[Lemma.3.1.5]{dhf2}.
\end{proof}

% conclusion

%It follows that for regular proper groupoids our theorem completely characterize the invariantly linearizable groupoids. 

\begin{corollary}\label{cor:regular}
A regular proper groupoid $G\toto M$ is invariantly linearizable if and only if it admits a 2-metric $\eta_2$ with $\eta_0$ complete. 
\end{corollary}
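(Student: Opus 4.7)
The plan is to combine the two main results of the paper with the regular extension lemma to get both implications simply.

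For the easy direction $(\Leftarrow)$, I would simply invoke Theorem \ref{thm:sufficient}: any proper groupoid (regular or not) admitting a 2-metric $\eta_2$ whose induced 0-metric $\eta_0$ is complete is invariantly linearizable around its orbits. Regularity plays no role here.

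For the nontrivial direction $(\Rightarrow)$, I would proceed in two steps. First, applying Theorem \ref{thm:main2} to the proper invariantly linearizable groupoid $G\toto M$ produces a complete 0-metric $\eta_0$ on $M$. Second, since $G\toto M$ is regular, Lemma \ref{prop:extension} allows me to extend this very $\eta_0$ to a 2-metric $\eta_2$ on $G\toto M$ (i.e., a 2-metric whose induced 0-metric is precisely the chosen $\eta_0$). Combined, these yield the desired 2-metric $\eta_2$ with $\eta_0$ complete.

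The only subtle point to verify is that the extension provided by Lemma \ref{prop:extension} really reproduces the given 0-metric at the level of normal data, so that completeness is inherited without modification. This is exactly what is claimed in the proof of Lemma \ref{prop:extension}: the gauge trick preserves $\eta_0$ in the regular setting, so the induced 0-metric of the constructed $\eta_2$ agrees with the starting complete $\eta_0$. Once this is noted, the corollary follows immediately with no further obstacle, and the whole argument is essentially a two-line assembly of previously established results—the interest of the statement lies precisely in how cleanly regularity bridges the gap between Theorem \ref{thm:main2} (which produces only a 0-metric) and the hypothesis of Theorem \ref{thm:sufficient} (which demands a 2-metric).
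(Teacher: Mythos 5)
Your proposal is correct and is precisely the argument the paper intends: the backward implication is Theorem \ref{thm:sufficient}, and the forward implication combines Theorem \ref{thm:main2} with Lemma \ref{prop:extension}, whose statement (``every 0-metric extends to a 2-metric'') already guarantees that the induced 0-metric of the constructed $\eta_2$ is the given complete one.
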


%We conjecture that the same characterization holds for non-regular groupoids, but we have not been able to show it here.

% \begin{corollary}
% Let $G\toto M$ be a proper foliation groupoid. Then the following are equivalent:
% \begin{itemize}
%  \item[i)] there exists a complete invariant metric on $M$;
%  
%  \item[ii)] $G \toto M$ is invariantly linearizable;
%  
%  \item[iii)] $G\toto M$ is source locally trivial.
% \end{itemize}
% \end{corollary}
% \begin{proof}
%  
% \end{proof}
% 
% \begin{remark}
% submersions...
% \end{remark}
% 
% \begin{remark}
% Riemannian foliations and their complete pseudo group of holomomy <--> source locally trivial holonomy...
% \end{remark}

% \begin{remark}
% In the above counterexample was used a cancellation trick to remove the exoticness of the fibers by taking the product with Lie group. This process increases the dimension of the isotropies, and our example does not cover groupoids whose isotropies are zero-dimensional, i.e., foliation groupoids.
% In \cite[Rmk B.3]{w} the author asked if the techniques of the above example could be extended to produces a foliation groupoid which is source locally trivial but it is not invariant linearizable. As far as we know, it is not clear that could be possible.
% \end{remark}

%%%%%%%%%%%%%%%%%%%%%%%%%%%%%%%%%%%%%%%%%%%%%%
%%%%%%%%%%%%%%%%%%%%%%%%%%%%%%%%%%%%%%%%%%%%%%

{\small

}
%%%%%%%%%%%%%%%%%%%%%%%%%%%%%%%%%%%%%%

\bigskip

{\footnotesize
 
\sf{\noindent 
Matias del Hoyo\\
Universidade Federal Fluminense (UFF),\\
Rua Professor Marcos Waldemar de Freitas Reis, s/n
Niteroi, 24.210-201 RJ, Brazil.
\\
mldelhoyo@id.uff.br}

\

\sf{\noindent 
Mateus de Melo\\
Universidade de S\~{a}o Paulo (USP)\\
Instituto de Matem\'{a}tica e Estat\'{\i}stica \\
Rua do Mat\~{a}o 1010, 05.508-090 S\~{a}o Paulo, Brazil. \\
melomm@impa.br}

}

\end{document}